\documentclass[12pt]{article}  

\usepackage{amsmath,amsthm,amssymb,mathdots,hyperref,shuffle,array,enumerate,float}
\usepackage{listings}
\usepackage{graphicx}
\usepackage{setspace}
\usepackage[vcentermath]{youngtab}
\usepackage{color}
\setstretch{1.1}
\allowdisplaybreaks

\usepackage{ifthen}

%%%%%%%%%%%%%%
\setlength{\topmargin}{-60pt}
\setlength{\headheight}{12truept}
\setlength{\headsep}{25pt}
\setlength{\footskip}{37pt}
\setlength{\hoffset}{10mm}
\setlength{\voffset}{39pt}
\setlength{\oddsidemargin}{-7mm}
\setlength{\evensidemargin}{-7mm}
\setlength{\textheight}{217mm}
\setlength{\textwidth}{156mm}
%%%%%%%%%%%%%%%%%%

\parindent 0.0em

%%%%%%%%%%%%%%%%%%%%
\newtheorem{theorem}{Theorem}[section]
\newtheorem{proposition}[theorem]{Proposition}
\newtheorem{definition}[theorem]{Definition}
\newtheorem{lemma}[theorem]{Lemma}
\newtheorem{corollary}[theorem]{Corollary}
\newtheorem{conjecture}[theorem]{Conjecture}

\newtheorem{remark}[theorem]{Remark}
\numberwithin{equation}{section}
%%%%%%%%%%%%%%%%%%%%%%%%

%%%%%%% Spaces %%%%%%%%%

\newcommand{\Z}{{\mathbb Z}}
\newcommand{\N}{{\mathbb N}}

\newcommand{\Q}{{\mathbb Q}}
\newcommand{\MZ}{\mathcal Z}
\DeclareMathOperator{\calA}{\mathcal{A}}
\DeclareMathOperator{\MD}{\mathcal{MD}}
\DeclareMathOperator{\bMD}{\mathcal{BD}}

\newcommand{\zq}[1][]{%
	\ifthenelse{ \equal{#1}{} }
	{\ensuremath{\mathcal{Z}_{q}}}
	{\ensuremath{\mathcal{Z}_{q,#1}}}
}  

\newcommand{\zqc}[1][]{%
	\ifthenelse{ \equal{#1}{} }
	{\ensuremath{\mathcal{Z}^\circ_{q}}}
	{\ensuremath{\mathcal{Z}^\circ_{q,#1}}}
}

%%%%%%% Definitions %%%%%%%%%
\DeclareRobustCommand{\mb}{\genfrac{[}{]}{0pt}{}}

\DeclareRobustCommand{\mt}{\genfrac{|}{|}{0pt}{}}

\newcommand{\num}{ \operatorname{num} }

\DeclareMathOperator{\odd}{\mathsf{O}}
 \DeclareMathOperator{\ev}{\mathsf{E}}
\DeclareMathOperator{\tM}{\widetilde{\mathsf{M}}}
\DeclareMathOperator{\Dx}{\mathsf{D}}
\DeclareMathOperator{\Sx}{\mathsf{S}}
\DeclareMathOperator{\OZ}{\mathsf{Z}}
\DeclareMathOperator{\sera}{\mathsf{a}}

%%% Gradings 
\newcommand{\gr}{ \operatorname{gr}}
\newcommand{\filwle}{\operatorname{Fil}^{\operatorname{W},\operatorname{D}}}
\newcommand{\filw}{ \operatorname{Fil}^{\operatorname{W}}}
\newcommand{\fild}{ \operatorname{Fil}^{\operatorname{D}}}
\newcommand{\grwl}{ \operatorname{gr}^{\operatorname{W},\operatorname{D}}}
\newcommand{\grw}{ \operatorname{gr}^{\operatorname{W} }}
\newcommand{\grl}{ \operatorname{gr}^{\operatorname{D} }}

\title{{ \bf  A dimension conjecture for $q$-analogues of multiple zeta values}}
\author{ Henrik Bachmann,  Ulf K\"uhn}

\date{\today}

\begin{document}
	
	\maketitle
	\begin{abstract} 
We study a class of $q$-analogues of multiple zeta values given by certain formal $q$-series with rational coefficients. After introducing a notion of weight and depth for these  $q$-analogues of multiple zeta values we present dimension conjectures for the spaces of their weight- and depth-graded parts, which have a similar shape as the conjectures of Zagier and Broadhurst-Kreimer for multiple zeta values.
	\end{abstract}
	
	\section{Introduction}
Multiple zeta values are real numbers appearing in various areas of mathematics and theoretical physics. By a $q$-analogue of these numbers one usually understand $q$-series, which degenerate to multiple zeta values as $q\rightarrow 1$. The algebraic structure of several models of $q$-analogues has been the subject of recent research (see \cite{Zh} for an overview). Besides a conjecture of Okounkov in \cite{O} for the dimension of the weight-graded spaces 
for a specific such model, no conjectures for the dimensions of the spaces of any of these $q$-analogues in a given weight and depth have occurred in the literature. The purpose of this work is to introduce a space of $q$-series which contains a lot of these models and to present conjectures on the dimensions of their weight- and depth-graded parts.	
	For natural numbers $s_1\geq2$, $s_2,\dots,s_l \geq 1$ define the multiple zeta value (MZV)
	\[ \zeta(s_1, \dots,s_l) = \sum_{n_1 >  \dots > n_l > 0} \frac{1}{n_1^{s_1} \dots n_l^{s_l}}\,. \]
	By $s_1+ \dots+s_l$ we denote its weight, by $l$ its depth and we write $\MZ$ for the $\Q$-vector space spanned by all MZVs. It is a well-known fact that the space $\MZ$ is a $\Q$-algebra and that there are two different ways, known as respectively the stuffle and shuffle product formulas, which express the product of two MZVs as a $\Q$-linear combination of MZVs. These two ways of writing the product gives a large family of $\Q$-linear relations between MZVs in a fixed weight, known as double shuffle relations. 
Conjecturally all relations between MZVs follow from these type of relations. 
In particular it is conjectured that the algebra $\MZ$ is graded by the weight.
Let $\MZ_k$ denote  the $\Q$-vector space  spanned by the MZVs of weight $k$, then there is the following famous dimension conjecture due to Zagier:

%\rr{	Conjecturally all relations between MZVs follow from these type of relations and in particular one expects that there are no relations between MZVs of different weights. Denote by $\gr_{k}^{W} \MZ$ the MZVs of weight $k$ modulo lower weight, then there is the following famous dimension conjecture due to Zagier:}

%	\rr{Kommentar, dass diese Vermutung bei Zagier (und ueberall sonst auch) anders formuliert wird. Mit $\MZ_k$}
	\begin{conjecture}[Zagier \cite{Za}]\label{conj:zagier}
		The following identity holds: 
		\begin{align*}
		\sum_{k \geq 0} \dim_\Q \left(
		%\gr_{k}^{W} \MZ 
		\MZ_k
		\right) x^k    = \frac{1}{1-x^2-x^3}.
		\end{align*}
	\end{conjecture}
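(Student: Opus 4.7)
The conjecture asserts the equality of two sequences: on one hand the dimensions $d_k := \dim_\Q \MZ_k$, and on the other the integers $c_k$ determined by $c_k = c_{k-2}+c_{k-3}$ with $c_0=1$, $c_1=0$, $c_2=1$, which are the Taylor coefficients of $1/(1-x^2-x^3)$. The plan is therefore to split the claim into an upper bound $d_k\leq c_k$ and a lower bound $d_k\geq c_k$, and to verify both presupposes that the weight is actually a grading --- itself a conjectural statement.

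For the upper bound my strategy would be the motivic approach of Goncharov, Terasoma, Deligne-Goncharov and Brown. The first step is to lift every MZV to a motivic period $\zeta^{\mathfrak{m}}(s_1,\dots,s_l)$ inside the Hopf algebra $\mathcal{H}$ of motivic periods of the category $\mathrm{MT}(\Z)$ of mixed Tate motives unramified over $\Z$. One then uses that $\mathrm{MT}(\Z)$ is a Tannakian category whose motivic Galois group is a pro-unipotent extension of $\mathbb{G}_m$ by a free pro-unipotent group with one generator in each odd degree $\geq 3$; this yields that $\mathcal{H}$ is cofree with Poincar\'e series $1/(1-x^2-x^3)$. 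Composing with the period map gives a surjection $\mathcal{H}_k \twoheadrightarrow \MZ_k$, so $d_k\leq c_k$. The main technical ingredient here, which I would quote rather than reprove, is Brown's theorem that the motivic MZVs with $s_i\in\{2,3\}$ already span $\mathcal{H}$, together with the computation of the motivic Lie algebra of $\mathrm{MT}(\Z)$ via $K$-theory of $\Z$ (Borel).

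For the lower bound, which is the main obstacle and the reason the conjecture remains open, one would need to exhibit $c_k$ many $\Q$-linearly independent real numbers among the weight-$k$ MZVs. Already in depth one this entails the algebraic independence over $\Q$ of $\zeta(3),\zeta(5),\zeta(7),\dots$, which is far beyond the reach of current transcendence theory (even the irrationality of $\zeta(5)$ is unknown). The cleanest way to package what can be proved is to formulate the lower bound as a consequence of Grothendieck's period conjecture applied to $\mathrm{MT}(\Z)$: if the period map $\mathcal{H}\to\R$ were injective, then the upper bound from the previous paragraph would be an equality.

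In summary, the realistic output of the plan is a proof of the upper bound $\dim_\Q\MZ_k\leq c_k$ via motivic methods, together with a reduction of the matching lower bound to Grothendieck's period conjecture. A full unconditional proof would require a breakthrough in transcendence theory that I do not expect to be within reach here; this is the step I would flag as the essential obstacle.
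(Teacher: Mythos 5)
This statement is an open conjecture that the paper simply quotes from Zagier; there is no proof of it in the paper, nor anywhere else in the literature. So the right benchmark here is not ``does the proposal match the paper's argument'' but ``does the proposal correctly assess what can and cannot be proved'' --- and on that score your write-up is accurate. The upper bound $\dim_\Q \MZ_k \leq c_k$, where $c_k = c_{k-2} + c_{k-3}$, is indeed a theorem (Terasoma, Deligne--Goncharov), proved exactly along the lines you sketch: one lifts MZVs to motivic periods of the category of mixed Tate motives over $\Z$, uses Borel's computation of the $K$-theory of $\Z$ to identify the motivic Lie algebra as free with one generator in each odd degree $\geq 3$, and concludes that the Hopf algebra of motivic periods has Poincar\'e series $1/(1-x^2-x^3)$; the period map then gives the surjection onto $\MZ_k$ (granting that weight is a grading, which as you note is itself part of the conjecture --- strictly one bounds the span of all MZVs of weight $\leq k$, or works with the motivic objects where the grading is automatic). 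Brown's theorem that the $\zeta^{\mathfrak m}(s_1,\dots,s_l)$ with $s_i \in \{2,3\}$ span is the ingredient the paper itself cites in the surrounding discussion of Hoffman's conjecture.

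The one thing to be clear about is that your proposal is not, and openly does not claim to be, a proof of the stated identity: the lower bound $\dim_\Q \MZ_k \geq c_k$ would require showing that $c_k$ specific real numbers are $\Q$-linearly independent, and as you say this already subsumes the irrationality of $\zeta(5)$, $\zeta(7)$, etc., which is unknown. Reducing the lower bound to Grothendieck's period conjecture (injectivity of the period map on $\mathcal{H}$) is the standard and correct way to package this. So the proposal is an honest and essentially complete account of the state of the art on a statement that remains conjectural; there is no error in it, only the unavoidable gap that makes the statement a conjecture rather than a theorem.
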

	A stronger version of this conjecture was later proposed by Hoffman (\cite{Hoffman}), which states that the $\zeta(s_1,\dots,s_l)$ with $s_j \in \{2,3\}$ form a basis of $\MZ$. So far it is only known, due to a result of Brown (\cite{Brown}), that these MZVs span the space $\MZ$.
	Conjecture \ref{conj:zagier} has a refinement by Broadhurst and Kreimer who proposed the following conjecture on the dimension of the weight- and depth-graded parts:
	\begin{conjecture}[Broadhurst-Kreimer \cite{brokrei}] \label{conj:brokrei}
		The generating series of the dimensions of the weight- and depth-graded parts of multiple zeta values is given by
		\begin{align*}
		\sum_{k,l \geq 0} \dim_\Q \left(\grl_{l} \MZ_k\right) x^k y^l  &
		= 
		\frac{1 +\ev_2(x) y}{ 1- \odd_3(x) y + \Sx(x) y^2 - \Sx(x) y^4},
		\end{align*}
		where %we essentially follow the notation of \cite{IKZ} 
		\begin{align*}
		\ev_2(x) = \frac{ x^2}{1-x^2},  \quad
		\odd_3(x) = \frac{ x^3}{1-x^2} , \quad
		\Sx(x) = \frac{x^{12}}{(1-x^4)(1-x^6)}.
		\end{align*}
	\end{conjecture}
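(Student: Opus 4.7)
The plan is to split the statement into matching upper and lower bounds on the generating series $\sum_{k,l} \dim_\Q(\grl_l \MZ_k) x^k y^l$. The natural framework is Brown's theory of motivic multiple zeta values: one lifts every classical MZV to its motivic analogue inside a graded comodule over the fundamental Hopf algebra of the Tannakian category of mixed Tate motives over $\Z$, where the weight grading is canonical and an intrinsic ``motivic depth'' filtration is defined. One then aims to establish the identity at the motivic level and transport it back to $\MZ$.

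For the upper bound I would refine Brown's proof of Zagier's conjecture so that it also tracks depth. The factor $1/(1-\odd_3(x)y)$ is suggested by Hoffman's conjectural basis built from the odd single zeta values $\zeta(2k+1)$, and the numerator $1 + \ev_2(x)y$ would account for the powers of $\zeta(2)$, which live in depth $1$ but contribute independently in every even weight. The non-routine task is to pin down the contribution of the $\Sx(x)y^2$ and $\Sx(x)y^4$ terms in the denominator. I would attempt this via the Ihara action on the depth-graded double shuffle Lie algebra and its mould-theoretic description due to Ecalle and Schneps, using the fact that each cusp form of weight $w$ for $\Sl_2(\Z)$ corresponds to a specific extra relation in weight $w$ at a controlled depth.

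For the lower bound one must exhibit enough $\Q$-linearly independent MZVs in each bidegree. This is where the deepest obstacle lies: proving any nontrivial lower bound on $\dim_\Q \MZ_k$ is a transcendence problem currently out of reach, as the $\Q$-linear independence of $\zeta(3), \zeta(5), \dots$ is already unknown. A realistic target is therefore a conditional statement: assuming Grothendieck's period conjecture, the upper bound on motivic dimensions transfers to the classical side, and a matching motivic lower bound is produced by an explicit construction of independent generators inside the motivic comodule.

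The main obstacle will be making the modular-form contribution precise. The appearance of the cusp form Hilbert series $\Sx(x) = x^{12}/((1-x^4)(1-x^6))$ at depths $2$ and $4$ is a well-understood heuristic going back to Broadhurst and Kreimer themselves, but converting it into a rigorous identification of the kernel of the natural depth-increasing bracket on the free Lie algebra generated by the odd zeta values, in exactly the form dictated by the denominator, is the step that has defeated every attack on the conjecture to date. This is the point at which I expect any genuine proof attempt to stall, and consequently the reason that Conjecture \ref{conj:brokrei} remains open even modulo Grothendieck's period conjecture.
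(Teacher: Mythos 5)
This statement is not a theorem of the paper: it is the Broadhurst--Kreimer conjecture, stated verbatim as Conjecture~\ref{conj:brokrei} and attributed to \cite{brokrei}, and the paper offers no proof of it --- it serves only as motivation for the authors' analogous Conjecture~\ref{conj:bkforbibra} about $\zq$. So there is no ``paper's own proof'' to compare against, and your proposal, by your own admission in its final paragraph, is not a proof either. That self-assessment is accurate, and the honest verdict is that the gap is total: neither the upper bound nor the lower bound on $\dim_\Q(\grl_l \MZ_k)$ in the bigraded sense is currently provable. On the lower-bound side, as you say, even $\Q$-linear independence of $\zeta(3),\zeta(5),\dots$ is open, so no unconditional lower bound beyond trivial ones exists. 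On the upper-bound side, Brown's theorem controls only the weight-graded dimension (Zagier's bound); the depth-refined upper bound encoded by the denominator $1-\odd_3(x)y+\Sx(x)y^2-\Sx(x)y^4$ is itself conjectural, and the only rigorous partial results --- which the paper records in its Theorem~3.4 --- are the bounds on the number of depth-graded generators for $l\le 3$ via the double shuffle spaces of Ihara--Kaneko--Zagier, Goncharov, and Ihara--Ochiai. Your sketch correctly names the decisive obstruction (making the cusp-form contribution at depths $2$ and $4$ rigorous via the depth-graded double shuffle or Ecalle--Schneps formalism), but naming an obstruction is not overcoming it.

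One further caution: even granting Grothendieck's period conjecture, your proposed reduction does not close the argument, because the motivic depth filtration need not coincide a priori with the depth filtration on $\MZ$ defined by the number of arguments (the discrepancy already appears for $\zeta(3)=\zeta(1,2)$), and the conjectural exactness properties of the depth-graded motivic Lie algebra that would yield the stated rational function are themselves unproved. Your writeup is a reasonable survey of the state of the art, but it should be presented as such, not as a proof proposal.
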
 
Observe that $\ev_2(x)$ (resp. $\odd_3(x)$) is the generating series of the number of even (resp. odd) zeta values
and $\Sx(x)$ is the generating series for the dimensions of cusp forms for $\operatorname{SL}_2(\Z)$.
Furthermore, by setting $y=1$ on the right-hand side of the Broadhurst-Kreimer conjecture one obtains precisely the right-hand side in the Zagier conjecture.	
	We are interested in conjectures  similar to  the above in the context of  $q$-analogues of multiple zeta values. 
	There are various different models of $q$-analogues for multiple zeta values. For most of these models the algebraic setup, i.e. analogues of the stuffle and the shuffle product, is well understood (See for example \cite{Ba5}, \cite{ems}, \cite{S}, \cite{yt}, \cite{Zh}). 
%There have been attempts to understand the dimension of the weight graded spaces (See for example \cite{BK}, \cite{O} ,\cite{yt}, \cite{Z}),	
	The problem of understanding the dimension of the weight-graded spaces has been considered in \cite{Ba5}, \cite{BK}, \cite{ems}, \cite{O}, \cite{S}, \cite{yt}, \cite{Zh}. On the other hand possible analogues of the Broadhurst-Kreimer conjecture for these $q$-analogues have not been proposed yet.
 
	Now we will define the $q$-analogues of multiple zeta values we consider in this paper. For $s_1,\dots,s_l \geq 1$ and polynomials $Q_1(t) \in t \Q[t]$ and $Q_2(t)\dots,Q_l(t) \in \Q[t]$ we define
	\begin{equation} \label{def:Z1}
	\zeta_q(s_1,\dots,s_l ; Q_1 , \dots , Q_l) = \sum_{n_1>\cdots>n_l>0} \frac{Q_1(q^{n_1}) \dots Q_l(q^{ n_l})}{(1-q^{n_1})^{s_1}\cdots (1-q^{n_l})^{s_l}} \,.
	\end{equation}
	This series can be seen as a $q$-analogue\footnote{These type of series are often called \emph{modified} $q$-analogues of multiple zeta values, since one needs to multiply by $(1-q)^{s_1+\dots+s_l}$ before taking the limit $q\rightarrow 1$} of $\zeta(s_1,\dots,s_l)$, since we have for $s_1 > 1$  \[ \lim\limits_{q\rightarrow 1} (1-q)^{s_1+\dots+s_l} \zeta_q(s_1,\ldots,s_l; Q_1 , \dots , Q_l) = Q_1(1) \dots Q_l(1)\cdot \zeta(s_1,\ldots,s_l) \,. \]
	We only consider the case where $\deg(Q_j) \leq s_j$ and consider the following $\Q$-algebra: 
	\[ \zq := \Big \langle   \zeta_q(s_1,\dots,s_l ; Q_1 , \dots , Q_l) \,\,\big|\,\, l\ge 0 ,\,s_1,\dots,s_l\geq1 ,\,\deg(Q_j) \leq s_j 
	\Big\rangle_\Q\,. \]
	Contrary to the case of MZVs, the number $s_1+\dots+s_l$ does not give a good notion of weight for the $\zeta_q$, since for example $\zeta_q(s;Q) = \zeta_q(s+1,Q \cdot (1-t))$. Also the number $l$ will not be used to define the depth. Instead we will consider a class of $q$-series which also span the space $\zq$ and use these series to define a weight and a depth filtration on $\zq$.
	For $s_1,\dots,s_l \geq 1$, $r_1,\dots,r_l \geq 0$ these $q$-series are given by
	\begin{align}\label{eq:defbibracketintro}
	\mb{s_1, \dots , s_l}{r_1,\dots,r_l} :=\sum_{\substack{u_1 > \dots > u_l> 0 \\ v_1, \dots , v_l >0}} \frac{u_1^{r_1}}{r_1!} \dots \frac{u_l^{r_l}}{r_l!} \cdot \frac{v_1^{s_1-1} \dots v_l^{s_l-1}}{(s_1-1)!\dots(s_l-1)!}   \cdot q^{u_1 v_1 + \dots + u_l v_l} \in \Q[[q]]\,.
	\end{align}
	We refer to these  $q$-series as \emph{bi-brackets} of depth $l$ and 
	weight $s_1+\dots+s_l+r_1+\dots+r_l$. 
	They were introduced by the first author in \cite{Ba2} and their algebraic structure is well-understood and described in the papers \cite{Ba2}, \cite{Ba4}, \cite{Ba5}, \cite{Z}. 
	The bi-brackets  have a natural connection to quasi-modular forms (for $\operatorname{SL}_2(\Z)$), since for even $k$ the Fourier expansion of the classical Eisenstein series $G_k$ of weight $k$ is given by $\mb{k}{0}$ plus an appropriate constant term. 
	In particular the space of quasi-modular forms with rational coefficients, which is given by $\Q[G_2,G_4,G_6]$, is a sub-algebra of the space $\zq$. 
	
	As we will see in Theorem \ref{thm:bdasqmzv} the bi-brackets span the space $\zq$ and therefore we can define a weight and a depth filtration by using the notion of weight and depth of bi-brackets. We point to the fact that $\zq$ is not graded by the weight, i.e. 
 the weight graded spaces
  $\gr_{k}^{W} \zq$  are in general not isomorphic to  the $\Q$-vector spaces spanned by bi-brackets of weight $k$. 
	In analogy to the Zagier and Broadhurst-Kreimer conjecture we conjecture the following.
	\begin{conjecture}  \phantomsection\label{conj:bkforbibra}  
 	\begin{enumerate}[(i)]   
			\item The dimension of the weight graded parts of $\zq$ is given by
			\[
			\sum_{k \geq 0} \dim_\Q \left(\gr_{k}^{W} \zq \right)  x^k  = \frac{1}{ 1 -x -x^2-x^3 + x^6 +x^7 +x^8 +x^9 }.
			\]
			\item The dimension of the weight and depth graded parts of $\zq$ is given by 	 	\begin{align*}
			\sum_{k,l \geq 0} \dim_\Q \left( \grwl_{k,l} \zq \, \right) x^k y^l  =\frac{
			%1 +  a_1(x) y +a_2(x)y^2
			 1+ \Dx(x) \ev_2(x) y + \Dx(x)\Sx(x) y^2
			}{	1 - \sera_1(x)\, y  +\sera_2(x)\, y^2 - \sera_3(x) \, y^3 -\sera_4(x)\, y^4 +\sera_5(x)\, y^5  }   \,  
			,
			\end{align*}	
			where   $\Dx(x)=1/(1-x^2)$, $\odd_1(x) = x/(1-x^2)$ and   $\ev_2(x), \Sx(x)$ are as in Conjecture \ref{conj:brokrei} and
			\begin{align*}
	%		a_1(x) &= \Dx(x)\, \ev_2(x) \,,\qquad	&&a_2(x) = \Dx(x)\,\Sx(x) \,,\\
			\sera_1(x) &=  \Dx(x) \,\odd_1(x)\,, \qquad &&\sera_2(x) =  \Dx(x) \, \sum_{k\geq 1} \dim_\Q ( M_{k} (  \operatorname{SL}_2(\Z))^2  \, x^k  \,,  \\
			\sera_3(x) &=\sera_5(x)=  \odd_1(x)\, \Sx(x)
			\,,\qquad
			&&\sera_4(x) =  \Dx(x) \, \sum_{k\geq 1} \dim_\Q ( S_{k} (  \operatorname{SL}_2(\Z))^2  \, x^k \,.
			\end{align*} 
			Here $M_{k} (  \operatorname{SL}_2(\Z))$ and $S_{k} (  \operatorname{SL}_2(\Z))$ denote the spaces of modular forms and cusp forms for $\operatorname{SL}_2(\Z)$ of weight $k$. 
		\end{enumerate} 
	\end{conjecture}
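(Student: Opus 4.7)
The conjecture is in the ``Zagier--Broadhurst--Kreimer'' family of dimension predictions for a graded algebra built from multiple zeta-type objects, and my plan is to proceed in three stages: an upper bound via explicit relations, a structural interpretation that motivates the precise form of the rational function, and an (out-of-reach) lower bound via a motivic framework.

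For the upper bound, Theorem~\ref{thm:bdasqmzv} tells us that the bi-brackets $\mb{s_1,\dots,s_l}{r_1,\dots,r_l}$ span $\zq$, and in each bi-degree $(k,l)$ they form an explicit finite generating set. I would compile the known families of relations among bi-brackets -- stuffle and shuffle-type products, the partition identity, duality/reflection relations, and the special ``Eisenstein'' identities relating products of $G_2,G_4,G_6$ to bi-brackets, all developed in \cite{Ba2,Ba4,Ba5,Z} -- and assemble them into a linear relation matrix in fixed weight and depth. Computing its rank by computer then gives an upper bound on $\dim_\Q \grwl_{k,l}\zq$ which can be compared to the coefficient of $x^k y^l$ in the claimed series; this produces numerical evidence up to moderate weights and, should the bound be sharp, yields the upper-bound half of a proof.

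The second stage is a structural interpretation of the generating series that motivates the precise shape of the answer. The denominator factor $1/(1-\sera_1(x)y) = 1/(1-\Dx(x)\odd_1(x)y)$ suggests a free $\Q[G_2]$-module of depth-one generators $\mb{s}{r}$ with $s$ odd, while the numerator terms $\Dx(x)\ev_2(x)y$ and $\Dx(x)\Sx(x)y^2$ should encode the image of the quasi-modular forms $\Q[G_2,G_4,G_6]\subset\zq$ together with a cusp-form contribution in depth two. The higher denominator terms $\sera_2,\sera_3,\sera_4,\sera_5$, involving $\dim M_k(\operatorname{SL}_2(\Z))^2$ and $\dim S_k(\operatorname{SL}_2(\Z))^2$, should parallel the appearance of $\Sx(x)y^2-\Sx(x)y^4$ in the Broadhurst--Kreimer denominator and arise from period-polynomial-type relations in depth $\geq 2$. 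Verifying that specialising $y=1$ in (ii) reproduces the series in (i) is a short identity of rational functions that reduces (i) to (ii).

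The main obstacle is the lower bound. Even for classical MZVs only the upper bound in Zagier's conjecture is known, established via the motivic machinery of Deligne and Brown, and no analogous motivic theory is presently available for $q$-analogues of multiple zeta values. A complete proof of Conjecture~\ref{conj:bkforbibra} would presumably require constructing a graded coalgebra or Lie coalgebra attached to $\zq$ whose Poincar\'e series matches the conjectured rational function in both variables, which is well beyond current reach. The realistic unconditional contribution is therefore the upper bound from known relations combined with high-weight numerical agreement, exactly as with the original Zagier and Broadhurst--Kreimer conjectures.
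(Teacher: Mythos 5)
This statement is a conjecture, and the paper does not prove it; it assembles evidence consisting of (a) rigorous \emph{lower} bounds on $\dim_\Q \filwle_{k,l}(\zq)$ obtained by truncating the bi-brackets at order $q^N$, reducing coefficients modulo a large prime, and computing the rank of the resulting coefficient matrix, and (b) rigorous \emph{upper} bounds on the number of algebra generators in bidegree $(k,l)$ via the partition shuffle spaces, $g_{k,l} \le \dim_\Q \mathbb{PS}(k-l,l)$ (Corollary~\ref{ps-bounds}), obtained from the partition relation and the shuffle relations modulo products and lower depth. Combining the two with the known structure of quasi-modular forms proves part (i) for $k \le 7$ (Theorem~\ref{thm:upperBK}). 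Your second stage (the quasi-modular numerator, the cusp-form terms in the denominator, and the $y=1$ specialisation reducing (i) to (ii)) accurately reflects Proposition~\ref{prop:qmf}, Conjecture~\ref{conj-decomposition} and the surrounding discussion.

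The genuine gap is that you have the logic of the two bounds inverted for the $q$-series setting, so your program as stated yields no rigorous bound in either direction. Compiling ``known families of relations'' and computing the rank of the relation matrix gives an upper bound only if you certify that the relation set is complete, which you cannot; this is why the paper instead bounds \emph{generators} by a double-shuffle-type argument in the partition shuffle spaces. Conversely, your claim that the lower bound is out of reach by analogy with MZVs is false here: unlike real numbers, finitely many formal power series in $\Q[[q]]$ are provably linearly independent as soon as finitely many of their coefficients form a matrix of full rank, so the numerical computations in the paper are not mere ``agreement'' but unconditional lower bounds. It is precisely this asymmetry --- easy lower bounds from coefficients, hard upper bounds from relations --- that makes the $q$-analogue situation structurally different from Zagier's and Broadhurst--Kreimer's conjectures, and that your proposal misses.
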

	Note that setting $y=1$ in (ii) implies (i), this holds because of the  formula 
	\[
	\sum_{k\geq 0} \dim_\Q ( M_{k} (  \operatorname{SL}_2(\Z))^2  \, x^k = 
	\frac{1+x^{12}}{(1-x^4)(1-x^6)(1-x^{12})},
	\]
	which is straightforward to prove. 
	
	In the Broadhurst-Kreimer conjecture the numerator $1 +\ev_2(x) y$ can be interpreted as the generating series of $\dim_\Q \gr_{k,l}^{W,D} \Q[\zeta(2)]$,
	i.e.
	\[ \sum_{k,l \geq 0} \dim_\Q \grwl_{k,l}\left(\Q[\zeta(2)]\right) \, x^k y^l  = 1 +\ev_2(x) y\,. \]
As we will see in Proposition \ref{prop:qmf} the numerator in Conjecture \ref{conj:bkforbibra} (ii)  is essentially the generating series for the weight- and depth-graded dimensions of the quasi-modular forms, since 
%$a_1(x)$ 
$\Dx(x) \ev_2(x)$
counts the number of Eisenstein series and their derivatives and 
%$a_2(x)$ 
$\Dx(x) \Sx(x)$
corresponds to the number of cusp forms and their derivatives. Therefore it is reasonable to expect that
		\[ \sum_{k,l \geq 0} \dim_\Q \grwl_{k,l}\left(\Q[G_2,G_4,G_6]\right) \, x^k y^l  \overset{?}{=} 
		%1 +  a_1(x) y +a_2(x)y^2 
		 1+ \Dx(x) \ev_2(x) y + \Dx(x)\Sx(x) y^2
		\,. \]		
		In some sense the quasi-modular forms in the context of $q$-analogues of multiple zeta values play the role of the even single zeta values (see also \cite{Go}, \cite{Zu3}). 
		
For $k \le 15$ we determined, by calculating a large number of coefficients, lower bounds for  $\dim_\Q \left(\gr_{k}^{W} \zq \right)$, which equal the expected dimensions in Conjecture \ref{conj:bkforbibra} (i). 
Furthermore, see Theorem \ref{thm:upperBK} below, Conjecture \ref{conj:bkforbibra} (i) 
actually holds for $k \le 7$. For the refined Conjecture \ref{conj:bkforbibra} (ii) 
our computer experiments provide us with lower bounds, which again equal the expected dimensions, in the range given by Table \ref{table:exprange} on page \pageref{table:exprange}.\\

{\bf Acknowledgments}\\
We would like to thank N. Matthes for the careful  reading of our manuscript and his valuable comments. The first author would also like to thank the Max-Planck Institute for Mathematics in Bonn for hospitality and support.

	\section{$q$-analogues of MZVs and bi-brackets}

	Usually a function $f(q)$  is called a $q$-analogue of multiple zeta value, if $\lim\limits_{q \rightarrow 1} f(q)$ is a multiple zeta value. There are various different models of $q$-analogues in the literature (See \cite{Zh} for a nice overview). One of the first models was studied by Bradley \cite{db} and Zhao \cite{Zh1} independently. This model is given for $s_1\geq 2, s_2,\dots s_l \geq 1$ by the $q$-series 
	\begin{equation} \label{eq:qana} \sum_{n_1>\cdots>n_l>0} \frac{q^{(s_1-1) n_1} \dots q^{(s_l-1) n_l}}{\{n_1\}_q^{s_1}\cdots \{n_l\}_q^{s_l}} \,,
	\end{equation}
	with $\{n \}_q= \frac{1-q^n}{1-q}$ being the usual $q$-integer. Taking the limit $q \rightarrow 1$ in above sum one obtains $\zeta(s_1,\dots,s_l)$. For a cleaner description of the algebraic structure and (in our case) a connection to modular forms it is convenient to consider a modified version of \eqref{eq:qana} by removing the factor $(1-q)^{s_1+\dots+s_l}$, i.e. to consider the series
	
	\begin{equation}\label{eq:defzetaq} \zeta^{\text{BZ}}_q(s_1,\dots,s_l) =   \sum_{n_1>\cdots>n_l>0} \frac{q^{(s_1-1) n_1} \dots q^{(s_l-1) n_l}}{(1-q^{n_1})^{s_1}\cdots (1-q^{n_l})^{s_l}} \,,
	\end{equation}
	which then satisfies $\lim\limits_{q\rightarrow 1} (1-q)^{s_1+\dots+s_l} \zeta_q(s_1,\ldots,s_l) = \zeta(s_1,\ldots,s_l)$.

	In a greater generality we will consider for $s_1,\dots,s_l \geq 1$ and polynomials $Q_1(t) \in t \Q[t]$ and $Q_2(t)\dots,Q_l(t) \in \Q[t]$ sums of the form 
	\begin{equation} \label{def:Z}
	\zeta_q(s_1,\dots,s_l ; Q_1 , \dots , Q_l) = \sum_{n_1>\cdots>n_l>0} \frac{Q_1(q^{n_1}) \dots Q_l(q^{ n_l})}{(1-q^{n_1})^{s_1}\cdots (1-q^{n_l})^{s_l}} \,.  
	\end{equation}
	The condition $Q_1(t) \in t \Q[t]$ ensures that this is an element in $\Q[[q]]$. In contrast to \eqref{eq:defzetaq} we also allow $s_1=1$ in our setup, i.e. we also include  $q$-analogues of the non-convergent multiple zeta values. 
	In the case $s_1 > 1$ we can (by the same arguments as in \cite{BK} Proposition 6.4) again take the limit $q \rightarrow 1$ after multiplying by $(1-q)^{s_1+\dots+s_l}$ , which gives
	\[ \lim\limits_{q\rightarrow 1} (1-q)^{s_1+\dots+s_l} \zeta_q(s_1,\ldots,s_l; Q_1 , \dots , Q_l) = Q_1(1) \dots Q_l(1)\cdot \zeta(s_1,\ldots,s_l) \,. \]
	Almost all models of $q$-analogues in the literature are given by sums of the form \eqref{def:Z}.  In the following we always set $\zeta_q(s_1,\ldots,s_l; Q_1 , \dots , Q_l)=1$ for the case $l=0$.
	We will consider the following spaces spanned by the series \eqref{def:Z} of a particular kind
	\[ \zq = \Big \langle   \zeta_q(s_1,\dots,s_l ; Q_1 , \dots , Q_l) \,\,\big|\,\, l\ge 0 ,\,s_1,\dots,s_l\geq1 ,\,\deg(Q_j) \leq s_j 
	\Big\rangle_\Q\,,  \]
	where as before we always assume $Q_1(t) \in t \Q[t]$ and $Q_2(t)\dots,Q_l(t) \in \Q[t]$.  
	As we will see below $\zq$ is the space in which we are interested the most. 
	For $d\geq 0$ we define the subspace
	$ \zq[d] = \Big \langle   \zeta_q(s_1,\dots,s_l ; Q_1 , \dots , Q_l) \in \zq \,\,\big|\,\,\deg(Q_j) \leq s_j -d
	\Big\rangle_\Q$. 
	So in particular it is $\zq = \zq[0]$ and $\zq[d+1] \subset \zq[d]$.
	We also restrict to the case in which all polynomials $Q_j$  (not just $Q_1$) have no constant terms and therefore are elements in $t\Q[t]$. The resulting space is 
	denoted by
	\[ \zqc = \Big \langle   \zeta_q(s_1,\dots,s_l ; Q_1 , \dots , Q_l) \in \zq \,\,\big|\,\, Q_1,\dots,Q_l \in t \Q[t]
	\Big\rangle_\Q\,. \]
	For the  spaces $\zqc[d]$ given by  $\zqc \cap \zq[d]$  
	it holds  $\zqc = \zqc[0]$ and $\zqc[d+1] \subset \zqc[d]$.  
	Notice that all of these spaces are closed under multiplication. In depth one for example it is
	\[ \zeta_q(s_1; Q_1) \cdot \zeta_q(s_2; Q_2) = \zeta_q(s_1,s_2; Q_1,Q_2) + \zeta_q(s_2,s_1; Q_2,Q_1) +\zeta_q(s_1+s_2; Q_1 \cdot Q_2)  \,,\]
	and clearly $\deg Q_1 \cdot Q_2 \leq s_1+s_2-d$ if $\deg Q_j \leq s_j-d$ for $j=1,2$. 
	
	In \cite{Zh} Zhao considers for $s_1,\dots,s_l,d_1,\dots,d_l \in \Z$ the series
	\begin{equation} \label{eq:zhaozeta} \mathfrak{z}^{d_1,\dots,d_l}_q(s_1,\dots,s_l) = \sum_{n_1 > \dots > n_l >0} \frac{q^{n_1 d_1} \dots q^{n_l d_l}}{(1-q^{n_1})^{s_1}\cdots (1-q^{n_l})^{s_l}} \,,\end{equation}
	which gives an even more general setup than our $\zeta_q$. Especially these series can be seen as natural generators of the spaces $\zq[d]$ and $\zqc[d]$ by choosing the 
	appropriate
	 conditions on the $d_j$.
	%, since we have for example 
	%\[	
	%	\zq[d] = \Big \langle  \mathfrak{z}^{d_1,\dots,d_l}_q(s_1,\dots,s_l) \,\Big| \, l\ge 0 ,\,s_1,\dots,s_l\geq1 ,\,\begin{array}{c}
	%	1 \leq d_1 \leq s_1 -d  \\  0 \leq d_j \leq s_j -d\text{ for }j > 1
	%	\end{array}
	%	 \Big\rangle_\Q\,.
	%\]
	We will now give a short overview of different $q$-analogues of multiple zeta values, which can be written in terms of the $\zeta_q$ and relate them to the spaces $\zq[d]$ and $\zqc[d]$.
	
	\begin{enumerate}[(i)]
		\item	The space spanned by the Bradley-Zhao model $\zeta^{\text{BZ}}_q=\zeta_q(s_1,\dots,s_l; t^{s_1-1},\dots,t^{s_j-1})$, defined in \eqref{eq:defzetaq}, is given by\footnote{
			This follows easily from the fact that 
			$t^{j-1} (1-t)^{s-j}$ with $j=1,\dots,s$ (resp. $j=2,\dots,s$) forms a basis of $\{ Q \in \Q[t] \mid \deg Q \leq s-1 \} $ (resp. $\{ Q \in t \Q[t] \mid \deg Q \leq s-1 \} $).
		} 
		\[
		\zq[1] = \big \langle \zeta^{\text{BZ}}_q(s_1,\dots,s_l) \mid l\geq 0 \,, s_1 \geq 2, s_2,\dots,s_l \geq 1  \big \rangle_Q\,.
		\]
		
		\item Another interesting case  is the Schlesinger-Zudilin model. These q-analogues are for $s_1\geq 1, s_2,\dots,s_l \geq 0$   defined by
		\begin{align} \label{eq:schlesingerzudiling}
		\begin{split}
		\zeta^{\text{SZ}}_q(s_1,\dots,s_l) &= \sum_{n_1 > \dots > n_l >0} \frac{q^{n_1 s_1} \dots q^{n_l s_l}}{(1-q^{n_1})^{s_1}\cdots (1-q^{n_l})^{s_l}} \\
		&=\zeta_q(s_1,\dots,s_l; t^{s_1},\dots,t^{s_j}) \,. 
		\end{split}
		\end{align}
		The space spanned by these series is, using the same argument as in (i), given by
		\[	
		\zq = \big \langle  	\zeta^{\text{SZ}}_q(s_1,\dots,s_l) \, \big |  \, l \ge 0, \, s_1\geq 1, s_2,\dots,s_l \geq 0 \big\rangle_\Q\,.
		\]
		Originally defined by Schlesinger \cite{Sch} and Zudilin \cite{Zu2} for the cases $s_1 \geq 2, s_2,\dots,s_l \geq 1$, it was observed in \cite{S} and further discussed in \cite{ems} that the algebraic setup, especially the shuffle product analogue, for this model can be described nicely by allowing $s_1\geq 1, s_2,\dots,s_l \geq 0$.
		Restricting to $s_1,\dots,s_l\geq 1$ we get the subspace
		\[	
		\zqc = \big \langle  	\zeta^{\text{SZ}}_q(s_1,\dots,s_l) \, \big |  \, l \ge 0, \, s_1, \dots,s_l \geq1 \big\rangle_\Q\,.
		\]
		\item In \cite{YOZ} Ohno-Okuda-Zudilin define for  $s_1,\dots,s_l \in \Z$  the series
		\begin{align} \label{eq:ooz}
		\zeta^{\text{OOZ}}_q(s_1,\dots,s_l) &= \sum_{n_1 > \dots > n_l >0} \frac{q^{n_1}}{(1-q^{n_1})^{s_1}\cdots (1-q^{n_l})^{s_l}} \,.
		\end{align}
		In the case $s_1,\dots,s_l \geq 1$ these can be written as $\zeta_q(s_1,\dots,s_l; t,1,\dots,1) \in \zq$, but  the space spanned  by \eqref{eq:ooz} for $s_1,\dots,s_j\geq 1$ is a priori not given by one of the $\zq[d]$ or $\zqc[d]$.  
		
		\item For $s_1,\dots,s_l \geq 2$ Okounkov chooses the following polynomials in \cite{O}
		\[  Q^O_j(t) = \begin{cases} t^{\frac{s_j}{2}} & s_j = 2,4,6,\dots  \\ 
		t^{\frac{s_j-1}{2}} (1+t) & s_j=3,5,7,\dots . \end{cases} \]
		and defines $\OZ(s_1,\dots,s_l) =  \zeta_q(s_1,\dots,s_l; Q^O_1,\dots,Q^O_l)$. With the same arguments as before (see also the proof of Theorem \ref{thm:bdasqmzv} (iii)) the span of these series is given by 
		\[	
		\zqc[1] = \big \langle  \OZ(s_1,\dots,s_l) \, \big |  \, l \ge 0, \, s_1,\dots,s_l \geq 2 \big\rangle_\Q\,.
		\]
	\end{enumerate}

	Although the space $ \zq$ seems to be much larger than the space $\zqc$, we expect that they both coincide (Conjecture \ref{conj:bconj} (B2) below) and therefore every $\zeta^{\text{SZ}}_q$ should be expressible as a linear combination of $\zeta^{\text{SZ}}_q(s_1,\dots,s_l)$ with  $s_1,\dots,s_l \geq 1$. 
	In \cite{ems} (Theorem 5.5) such an expression
	for $\zeta^{\text{OOZ}}_q$  in terms of $\zeta^{\text{SZ}}_q$ is given, which in turn can be seen as a special case of that conjecture.

	\begin{remark} As seen in the example above, the polynomials $Q_j$ often depend just on $s_j$. For these types of $q$-analogues one can also define subspaces of $\zq$ in the following way: 
		Suppose that $\{Q_s\}_{s\geq 1}$ is a family of polynomials, where for all $s_1,s_2 \geq 1$ there exists numbers $\lambda_j^{s_1,s_2} \in \Q$ with $j\geq 1$ and $\lambda_j^{s_1,s_2} = 0$ for almost all $j$, such that 
		\[ Q_{s_1}(t) \cdot Q_{s_2}(t) = \sum_{j=1}^\infty \lambda_j^{s_1,s_2} Q_j(t) (1-t)^{s_1+s_j-j} \,.\]
		Then the space spanned by all $\zeta_q(s_1,\dots,s_l; Q_{s_1},\dots,Q_{s_l})$ is a sub-algebra of $\zq$.
		This also gives an example of a so called quasi-shuffle algebra as described in \cite{HoffmanIhara}. For this one can define for $a,b\geq 1$ the product $z_a \diamond z_b = \sum_{j=1}^\infty \lambda_j^{a,b} z_j$ with the same notation as used in the first section of \cite{HoffmanIhara}. This was for example done in \cite{BK} for the space $\zqc$.
	\end{remark}

	\subsection{Bi-brackets as $q$-analogues of MZVs}

	In this section we will consider the $q$-series from the introduction in more detail and explain their connection to $q$-analogues of multiple zeta values in the section before. 
	
	\begin{definition}\begin{enumerate}[(i)]
			\item For $s_1,\dots,s_l \geq 1$, $r_1,\dots,r_l \geq 0$ we define the following $q$-series
			\begin{align*}
			\mb{s_1, \dots , s_l}{r_1,\dots,r_l} :=\sum_{\substack{u_1 > \dots > u_l> 0 \\ v_1, \dots , v_l >0}} \frac{u_1^{r_1}}{r_1!} \dots \frac{u_l^{r_l}}{r_l!} \cdot \frac{v_1^{s_1-1} \dots v_l^{s_l-1}}{(s_1-1)!\dots(s_l-1)!}   \cdot q^{u_1 v_1 + \dots + u_l v_l} \in \Q[[q]]\,.
			\end{align*}
			We refer to these  $q$-series as \emph{bi-brackets} of depth $l$ and of
			weight $s_1+\dots+s_l+r_1+\dots+r_l$.
			\item	For $r_1=\dots=r_l=0$ we write
			\[  [s_1,\dots,s_l] :=  \mb{s_1, \dots , s_l}{0,\dots,0}  \,. \]  
			The series  $[s_1,\dots,s_l]$, which we call \emph{brackets}, were introduced and studied in \cite{BK}.
		\end{enumerate}
	\end{definition}

	The bi-brackets also have an alternative form, which we will use now. For this recall that the Eulerian polynomials (See for example \cite{df}) are defined by
	\[ \frac{t P_{s-1}(t)}{(1-t)^{s}} = \sum_{d=1}^\infty d^{s-1} t^d  \,.\] 
	For $s>1$ the polynomials $t P_{s-1}(t)$ have degree $s-1$ and in the case $s=1$ it is $t P_{0}(t) = t$. 
	By definition of the bi-brackets it is then clear that 
	\begin{equation} \label{eq:bibracinP}
	\mb{s_1, \dots , s_l}{r_1,\dots,r_l} = \sum_{n_1 >\dots > n_l >0} \prod_{j=1}^l  \left( \frac{n_j^{r_j}}{r_j!} \cdot \frac{q^{n_j} P_{s_j-1}(q^{n_j}) }{(s_j-1)! \cdot (1-q^{n_j})^{s_j}} \right) \,. 
	\end{equation}
	
	We will now see that the spaces spanned\footnote{
	 In the  articles \cite{Ba2}, \cite{Ba4} and \cite{BK} these spaces   were denoted $\bMD$ and $\MD$.
	}
	 by the bi-brackets and brackets are exactly given by the spaces $\zq$ and $\zqc$ respectively. 
	
	\begin{theorem}\label{thm:bdasqmzv} The following equalities hold
		\begin{enumerate}[(i)]
			\item \hfill$\begin{aligned}[t]\zq &= \Big \langle \mb{s_1, \dots , s_l}{r_1,\dots,r_l} \,\, \big| \,\, l\geq 0 , s_1,\dots,s_l\geq 1,\, r_1,\dots,r_l \geq 0 \Big \rangle_\Q \,.\end{aligned}$\hfill\null
			\item \hfill$\begin{aligned}[t] \zqc &= \big \langle [s_1,\dots,s_l] \mid l\geq 0 \,, s_1,\dots,s_l \geq 1  \big \rangle_Q \,.\end{aligned}$\hfill\null
			\item\hfill $\begin{aligned}[t]\zqc[1] &= \big \langle [s_1,\dots,s_l] \mid l\geq 0 \,, s_1,\dots,s_l \geq 2  \big \rangle_Q \,. \end{aligned}$\hfill\null
		\end{enumerate}
	\end{theorem}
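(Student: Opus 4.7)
The plan is to prove (ii) and (iii) first by direct polynomial expansion, and then bootstrap to (i). Two elementary identities are central: the Eulerian generating function $\sum_{v\geq 1} v^{s-1} t^v = tP_{s-1}(t)/(1-t)^s$, which already underlies the alternative form \eqref{eq:bibracinP}, and the \emph{unfolding identity}
\begin{equation*}
\sum_{u>0}\binom{u-1}{k} g(q^u) \;=\; \sum_{u>m_1>\dots>m_k>0} g(q^u),
\end{equation*}
valid for any $g\in\Q[[t]]$, which holds because $\binom{u-1}{k}$ counts chains $u>m_1>\dots>m_k>0$. Applied per gap $N_j:=n_j-n_{j+1}$ (with $n_{l+1}:=0$) one gets $\binom{N_j-1}{k_j} = \#\{n_j>m_{j,1}>\dots>m_{j,k_j}>n_{j+1}\}$, which is the tool that inserts dummy summation indices in a controlled way.

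For (ii), the inclusion $\langle[s_1,\dots,s_l]\rangle_\Q\subseteq\zqc$ is immediate from \eqref{eq:bibracinP}. For the converse, given $\zeta_q(s_1,\dots,s_l;Q_1,\dots,Q_l)\in\zqc$ with $Q_j(t)=\sum_{\alpha=1}^{s_j}a_{j,\alpha}t^\alpha$, I would expand
\[
\frac{q^{\alpha n_j}}{(1-q^{n_j})^{s_j}} \;=\; \sum_{v_j\geq 1}\binom{v_j-\alpha+s_j-1}{s_j-1}q^{n_j v_j}
\]
(the binomial polynomial in $v_j$ vanishes on $v_j=0,1,\dots,\alpha-1$ when $1\leq\alpha\leq s_j$), view the binomial as a polynomial in $v_j$ of degree $s_j-1$, re-express it in the monomial basis $\{v_j^k\}_{k=0}^{s_j-1}$, and collect. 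This exhibits $\zeta_q$ as a $\Q$-linear combination of brackets $[k_1+1,\dots,k_l+1]$ with $k_j\leq s_j-1$. Part (iii) then follows by tracing the same expansion under $\deg Q_j\leq s_j-1$: now $\alpha\leq s_j-1$, the binomial vanishes additionally at $v_j=0$, its constant term in $v_j$ is zero, only $k_j\geq 1$ contributes, and the produced brackets all satisfy $s'_j\geq 2$. The reverse inclusion in (iii) is immediate since $tP_{s_j-1}(t)/(s_j-1)!$ has degree exactly $s_j-1$ when $s_j\geq 2$.

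For (i), the forward inclusion starts from \eqref{eq:bibracinP}, whose only non-$\zeta_q$ feature is the factor $\prod_j n_j^{r_j}/r_j!$. Writing $n_j=\sum_{j'\geq j}N_{j'}$ and expanding via the multinomial theorem, this factor becomes a polynomial in $N_1,\dots,N_l$; expanding further in the basis $\prod_j\binom{N_j-1}{k_j}$ and applying the unfolding identity inserts $k_j$ dummy indices in each gap $(n_{j+1},n_j)$. Filling those dummy slots with $(s,Q)=(1,1-t)$ contributes $\prod(1-q^m)/(1-q^m)=1$ and realizes the bi-bracket as a genuine $\zeta_q$ of higher depth, hence in $\zq$. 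Conversely, for $\zq\subseteq\langle\mb{s_1,\dots,s_l}{r_1,\dots,r_l}\rangle_\Q$, split each $Q_j=Q_j(0)+\widetilde{Q}_j$ with $\widetilde{Q}_j\in t\Q[t]$ (nontrivial only for $j\geq 2$). The $\widetilde Q$-piece lies in $\zqc$, hence in the bracket and so bi-bracket span by (ii). A correction term containing $1/(1-q^{n_j})^{s_j}$ at slot $j$ is handled via $1/(1-q^n)^s=1+\widehat{Q}(q^n)/(1-q^n)^s$ with $\widehat{Q}\in t\Q[t]$: the second summand returns $\widetilde Q$-form, while the first produces a depth-$(l-1)$ sum weighted by the polynomial $n_{j-1}-n_{j+1}-1$. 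Running the expansion from (ii) on that weighted sum, each extra factor $n_{j\pm 1}$ promotes the bracket index at the corresponding slot to a bi-bracket with $r=1$, so the weighted sum sits in the bi-bracket span. Induction on depth and on $\#\{j:Q_j(0)\neq 0\}$ closes the argument.

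The chief technical point is the depth-$l$ bookkeeping in the forward direction of (i): a naive expansion of $n_j^{r_j}$ in the basis $\{\binom{n_j-1}{k}\}$ would insert dummies that may interleave with $n_{j+1},\dots,n_l$, so one must pass to the gap basis $\{\binom{N_j-1}{k}\}$ via the multinomial expansion above, and keep track of the combinatorics of inserting $\sum_j k_j$ indices across the $l$ gaps simultaneously. Once this setup is in place, all three parts reduce to matching polynomial coefficients in the $v_j$ or $N_j$ variables.
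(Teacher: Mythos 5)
Your argument is correct and reproduces the skeleton of the paper's proof --- reduce (ii) and (iii) to depth-one polynomial identities in the numerators, handle constant terms of $Q_j$ by summing out the corresponding variable to produce the weight $n_{j-1}-n_{j+1}-1$, and realize the $n_j^{r_j}$ factors by inserting dummy indices carrying $(1-q^m)/(1-q^m)$ --- but two of your technical devices differ from the paper's, and one is a genuine improvement. For (ii)/(iii) the paper works on the $t$-side: it asserts that $tP_{j-1}(t)(1-t)^{s-j}$, $j=1,\dots,s$, is a basis of $\{Q\in t\Q[t]\mid \deg Q\le s\}$ (using $P_{s-1}(1)\neq 0$) and decomposes $Q(t)/(1-t)^s$ in that basis; you work on the dual $v$-side, expanding $t^{\alpha}/(1-t)^{s}$ as $\sum_{v\ge 1}\binom{v-\alpha+s-1}{s-1}t^{v}$ and changing to the monomial basis in $v$. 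The two are equivalent (the paper's basis is exactly the preimage of the monomials $v^{j-1}$), and yours is arguably more self-contained. One small inaccuracy: for $\alpha=s_j$ the binomial does \emph{not} vanish at $v_j=0$ (it equals $(-1)^{s_j-1}$); this is harmless because your sum starts at $v_j\ge 1$, and the vanishing at $v_j=0$ is only invoked in (iii), where $\alpha\le s_j-1$ and it does hold. The more substantive divergence is in the inclusion of bi-brackets into $\zq$: the paper expands each $n_j^{r_j}$ separately in the basis $\binom{n_j-1}{m}$ and must then account for ``all possible shuffles and possible equalities'' of the inserted chains attached to different $j$, a step it leaves as an assertion; your passage to the gap variables $N_j=n_j-n_{j+1}$ and the basis $\prod_j\binom{N_j-1}{k_j}$ confines each batch of dummies to a single gap, so no interleaving bookkeeping is needed and the resulting chain is literally a single $\zeta_q$ of higher depth. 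That is a cleaner route for this step. The remaining direction of (i) --- splitting $Q_j=Q_j(0)+\widetilde Q_j$, writing $1/(1-t)^{s}=1+\widehat Q(t)/(1-t)^{s}$ with $\widehat Q\in t\Q[t]$, and inducting on depth and on the number of nonzero constant terms --- coincides with the paper's argument.
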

	\begin{proof} 
		Since for all $s\geq 1$ it is $P_{s-1}(1) \neq 0$  the polynomials $t P_{j-1}(t) (1-t)^{s-j}$ with $j=1,\dots,s$ form a basis of the space $\{ Q \in t \Q[t] \mid \deg Q \leq s \} $. In particular for every polynomial $Q$ in this space there exists coefficients $\alpha_j \in \Q$ with 
		\begin{equation} \label{eq:QasP}
		\frac{Q(t)}{(1-t)^s} = \sum_{j=1}^s \alpha_j \frac{t P_{j-1}(t)}{(1-t)^j}\,,
		\end{equation}
		from which the statement (ii) follows. Also (iii) follows, since for $d=1$ the condition $\Q_j(t) \in t \Q[t]$ and $\deg Q_j \leq s_j -1$ implies $s_j \geq 2$ for all $j=1,\dots,l$.  One can also see that 
		\[\big \langle [s_1,\dots,s_l] \mid l\geq 0 \,, s_1,\dots,s_l \geq 2  \big \rangle_Q = \big \langle \zeta^{\text{BZ}}_q(s_1,\dots,s_l)  \mid l\geq 0 \,, s_1,\dots,s_l \geq 2  \big \rangle_Q \,.\]

		To prove (i) we will first show the inclusion '$\subseteq$' , i.e. that every $\zeta_q(s_1,\dots,s_l;Q_1,\dots,Q_l)$ can be written in terms of bi-brackets. For this we need to see what happens if one of the $Q_2,\dots,Q_l$ has a constant term. Without loss of generality we can, by the proof of (ii), focus on the cases $Q_i(t) = 1$ for a $2 \leq i \leq l$ .  Since for all $s\geq 1$ it is
		\[ \frac{1}{(1-t)^s} = 1 + \sum_{m=1}^s \frac{t}{(1-t)^m}\,,\]
		we can write 
		\begin{align*} \sum_{n_1>\cdots>n_l>0} \prod_{j=1}^l \frac{Q_j(q^{n_j})}{(1-q^{n_j})^{s_j}} &=  \sum_{n_1>\cdots>n_l>0} \prod_{\substack{j=1\\ j \neq i}}^{l} \frac{Q_j(q^{n_j})}{(1-q^{n_j})^{s_j}} +  \sum_{\substack{n_1>\cdots>n_l>0\\ 1 \leq m \leq s_i}} \frac{q^{n_i}}{(1-q^{n_i})^m} \prod_{\substack{j=1\\ j \neq i}}^{l} \frac{Q_j(q^{n_j})}{(1-q^{n_j})^{s_j}} \,.
		\end{align*}
		For the the second sum on the right-hand side we can again use \eqref{eq:QasP}. For the first sum we obtain (by setting $n_{l+1} =0$)
		\begin{align*}
		\sum_{n_1>\cdots>n_l>0} \prod_{\substack{j=1\\ j \neq i}}^{l} \frac{Q_j(q^{n_j})}{(1-q^{n_j})^{s_j}} = \sum_{n_1>\cdots>n_{i-1} > n_{i+1} > \dots >n_l>0} (n_{i-1} - n_{i+1}-1) \prod_{\substack{j=1\\ j \neq i}}^{l} \frac{Q_j(q^{n_j})}{(1-q^{n_j})^{s_j}}\,.
		\end{align*}
		Repeating this for all  $2 \leq i \leq l$ with $Q_i(t) = 1$ we obtain sums of the form \eqref{eq:bibracinP} from which we deduce '$\subseteq$'.
		
		Now to prove '$\supseteq$' we first define for $m\geq 0$ the polynomials $p_m(n)$ by $p_0(n)=1$ and 
		\begin{equation}\label{eq:defpm}
		p_m(n) = \sum_{n > N_1 > \dots > N_m > 0} 1 \,. 
		\end{equation}
		The $p_m(n)$ is a polynomial in $n$ of degree $m$ and therefore we can always find $c_m(r) \in \Q$ with $n^r = \sum_{m=0}^r c_m(r)\, p_m(n)$. The idea is now to replace $n_j^{r_j}$ in the definition of the bi-brackets by $\sum_{m_j=0}^{r_j} c_{m_j}(r_j)\, p_{m_j}(n_j)$ and then use \eqref{eq:defpm} to get sums which can be written in terms of the $\zeta_q$. We illustrate this in the depth two case from which the general case becomes clear. 
		We have with $\kappa= (s_1-1)!(s_2-1)!r_1!r_2!$
		\begin{align*}
		\kappa \cdot \mb{s_1,s_2}{r_1,r_2} &= \sum_{n_1 > n_2 > 0} \frac{n_1^{r_1} q^{n_1} P_{s_1-1}(q^{n_1})}{(1-q^{n_1})^{s_1}} \frac{n_2^{r_2} q^{n_2} P_{s_2-1}(q^{n_2})}{(1-q^{n_2})^{s_2}} \\
		&= \sum_{0 \leq m_2 \leq r_2 } c_{m_2}(r_2) \sum_{n_1 > n_2 > N_1 > \dots > N_{m_2}> 0} \frac{n_1^{r_1} q^{n_1} P_{s_1-1}(q^{n_1})}{(1-q^{n_1})^{s_1}} \frac{ q^{n_2} P_{s_2-1}(q^{n_2})}{(1-q^{n_2})^{s_2}} \\
		&= \sum_{\substack{0 \leq m_1 \leq r_1 \\0 \leq m_2 \leq r_2 }} c_{m_1}(r_1) c_{m_2}(r_2) \sum_{\substack{n_1 > n_2 > N_1 > \dots > N_{m_2}> 0 \\ n_1 >  N'_1 > \dots > N'_{m_1}> 0}} \frac{q^{n_1} P_{s_1-1}(q^{n_1})}{(1-q^{n_1})^{s_1}} \frac{ q^{n_2} P_{s_2-1}(q^{n_2})}{(1-q^{n_2})^{s_2}}\,.
		\end{align*}
		Now considering all the possible shuffles, and possible equalities of the $N$ and the $N'$ it is clear that this sum can be written as a linear combination of $\zeta_q$ by interpreting appearing $1$ as $(1-q^N) (1-q^N)^{-1}$. For general depth $l$ the idea is the same and therefore we obtain '$\supseteq$' from which (i) follows.  
	\end{proof}
	
	As an example how to write a bi-bracket in terms of $\zeta_q$ we give the following.
	\begin{align}\label{example-fildefect} 
	\begin{split}
	\mb{1,1}{0,1} &= \sum_{n_1 > n_2 > 0} \frac{q^{n_1}}{(1-q^{n_1})}  \frac{n_2 q^{n_2}}{(1-q^{n_2})} \\&= \sum_{n_1 > n_2 > 0} \frac{q^{n_1}}{(1-q^{n_1})}  \frac{q^{n_2}}{(1-q^{n_2})} + \sum_{n_1 > n_2 > n_3 >0} \frac{q^{n_1}}{(1-q^{n_1})}  \frac{q^{n_2}}{(1-q^{n_2})} \frac{1-q^{n_3}}{(1-q^{n_3})}\\
	&= \zeta_q(1,1; t, t) +  \zeta_q(1,1,1;t,t,1-t)\,.
	\end{split}
	\end{align}

	\subsection{Bi-brackets and quasi-modular forms} 
	
	We now define the weight and the depth filtration for the space $\zq$ by writing for a subset $A\subseteq \zq$
	\begin{align*} 
	\filw_k(A) &:=  \big< \mb{s_1,\dots,s_l}{r_1, \dots, r_l} \in A \,\big|\, 0 \leq l \leq k \,,\, s_1+\dots+s_l+r_1+\dots+r_l \le k \,\big>_{\Q}\\
	\fild_l(A) &:=  \big<\mb{s_1,\dots,s_t}{r_1, \dots, r_t} \in A \,\big|\, t\le l \,\big>_{\Q}\,.
	\end{align*}
	If we consider the depth and weight filtration at the same time we use the short notation $\filwle_{k,l} := \filw_k \fild_l$ and similar for the other filtrations. The associated graded spaces will be denoted by $\grw_k$ and $\grwl_{k,l}$.
	
	\begin{remark}\begin{enumerate}[(i)]
			\item We point to the fact that the filtration by depth coming from bi-brackets is different from the naive notion of depth for the $\zeta_q(s_1,\dots,s_l)$, given as the number of variables $s_i$. For example, as indicated by  \eqref{example-fildefect}, the $\zeta_q(1,1,1;t,t,1-t)$ is an element in $\fild_2(\zq)$.
			\item  As seen before the Schlesinger-Zudilin model $	\zeta^{\text{SZ}}_q(s_1,\dots,s_l)$, defined in \eqref{eq:schlesingerzudiling} for $s_1\geq 1, s_2,\dots,s_l \geq 0$, span the space $\zq$ and therefore we also obtain a depth and weight filtration for these series. By the proof of Theorem \ref{thm:bdasqmzv} we see that  $\zeta^{\text{SZ}}_q(s_1,\dots,s_l) \in \filwle_{K,L}(\zq)$ with $K = s_1+\dots+s_l + z$ and $L = l + z$, where $z = \#\{ j \mid s_j = 0\}$ is the number of $s_j$ which are zero.
		\end{enumerate} 
	\end{remark} 
	
	For several reasons one should consider these filtrations to be the natural ones. 
	First of all the multiplication in $\zq$ respects the depth as well as the weight grading. Secondly, on $\zq$ we have the derivation given by $q\frac{d}{dq}$, it increases the weight by $2$ and 
	keeps the depth, since we obtain directly from the definition that
	\[ 
	q\frac{d}{dq} \mb{s_1, \dots , s_l}{r_1,\dots,r_l} = \sum_{j=1}^l \left( s_j (r_j+1) \mb{s_1\,,\dots\,,s_{j-1}\,,s_j+1\,,s_{j+1},\dots \,,s_l}{r_1 \,, \dots \,,r_{j-1}\,,r_j+1\,,r_{j+1} \,,\dots \,, r_l} \right) \, .
	\] 
	Thirdly, the  classical Eisenstein series are contained in $\zqc \subset \zq$. For example we have
	\[ {G}_2 = -\frac{1}{24} + [2] \,,\quad {G}_4 = \frac{1}{1440} + [4] \,, \quad {G}_6 = -\frac{1}{60480} + [6] \,,\]
since in depth one it is for $k>0$
	\[ [k] = \sum_{\substack{u>0\\v>0}} \frac{v^{k-1}}{(k-1)!} q^{u v} = \frac{1}{(k-1)!} \sum_{n>0} \sum_{d \mid n} d^{k-1}  q^n = \frac{1}{(k-1)!} \sum_{n>0}\sigma_{k-1}(n) q^n \,.\]

	The space of quasi-modular forms for $\operatorname{SL}_2(\Z)$ with rational coefficients is given by $\widetilde{M} (\operatorname{SL}_2(\Z))_\Q  = \Q[G_2,G_4,G_6]$  (see\cite{KaZa}) and therefore it is a sub-algebra of $\zqc$ and $\zq$. 
	It is graded by the weight, in the classical sense, and 
	obviously   $\widetilde{M}_k (\operatorname{SL}_2(\Z))_\Q  \subset \filw_k(\zq)$. 
	The derivation $ q\frac{d}{dq}$ increases the weight by $2$, i.e.
	\[
	q\frac{d}{dq} : \widetilde{M}_k(\operatorname{SL}_2(\Z))_\Q \to \widetilde{M} _{k+2} (\operatorname{SL}_2(\Z))_\Q.
	\]
The space of quasi-modular forms has the decomposition 
	\begin{align}\label{qmf:decomp}
	\widetilde{M}_k (\operatorname{SL}_2(\Z))_\Q  
	= \big\langle G_k , \, q\frac{d}{dq} G_{k-2},\dots, \,\big(q\frac{d}{dq}\big)^{k/2-1} G_2 \big\rangle_\Q \oplus
	\bigoplus_{i=0}^{k/2}  \,\, \big(q\frac{d}{dq}\big)^i S_{k-2 i}(\operatorname{SL}_2(\Z))_\Q\,.
	\end{align}
%	where $S_{k}(\operatorname{SL}_2(\Z))_\Q$ denotes the space of cusp forms of weight $k$ for $\operatorname{SL}_2(\Z)$ with rational coefficients.

	\begin{proposition}\label{prop:qmf} Set
		$
		\tM(x,t) = 1 + \Dx(x) \ev_2(x)\, t +  \Dx(x) \Sx(x)\,  t^2
		$,
		then the generating series 
		for the weight- and depth-graded dimensions of $ \widetilde{M} (\operatorname{SL}_2(\Z))_\Q \subset \zq$ 
		satisfies the coefficient-wise inequality
		\begin{align}\label{qmf-generating-series}
		\sum_{k,l} \dim_\Q  \grwl_{k,l}(\widetilde{M}(\operatorname{SL}_2(\Z))_\Q ) x^k t^l 
		\le
		\tM(x,t)   \,.	 
		\end{align}
		
	\end{proposition}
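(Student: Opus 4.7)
The strategy is to analyze the decomposition \eqref{qmf:decomp} of $\widetilde{M}_k(\operatorname{SL}_2(\Z))_\Q$ summand by summand within the depth filtration on $\zq$, and to recognize the three terms of $\tM(x,t)=1+\Dx(x)\ev_2(x)t+\Dx(x)\Sx(x)t^2$ as recording, respectively, the constants, the Eisenstein summand, and the cusp-form summand.

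\textbf{Depth-one summand.} For each even $k\ge 2$ we have $G_k=c_k+\mb{k}{0}$ with $c_k\in\Q$, so $G_k\in\filwle_{k,1}(\zq)$. The derivation formula
\[
q\frac{d}{dq}\mb{s_1,\dots,s_l}{r_1,\dots,r_l}=\sum_{j=1}^l s_j(r_j+1)\mb{s_1,\dots,s_j+1,\dots,s_l}{r_1,\dots,r_j+1,\dots,r_l}
\]
recorded in the preceding subsection raises the weight by two and preserves the depth, so each iterate $(q\frac{d}{dq})^j G_{k-2j}$ lies in $\filwle_{k,1}(\zq)$. The first summand of \eqref{qmf:decomp} therefore contributes at most $k/2$ generators of weight $k$ to the depth-one graded part, giving the generating series $\sum_{m\ge 1}m\,x^{2m}\,t=\Dx(x)\,\ev_2(x)\,t$.

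\textbf{Depth-two summand.} Conditional on the key containment
\[
S_k(\operatorname{SL}_2(\Z))_\Q\subset\filwle_{k,2}(\zq)\qquad\text{for every }k,
\]
the derivation formula again shows $(q\frac{d}{dq})^i S_{k-2i}(\operatorname{SL}_2(\Z))_\Q\subset\filwle_{k,2}(\zq)$, so the second summand of \eqref{qmf:decomp} contributes at most $\sum_{i\ge 0}\dim_\Q S_{k-2i}(\operatorname{SL}_2(\Z))_\Q$ generators of weight $k$ in depth two. Its generating series is $\Dx(x)\,\Sx(x)\,t^2$. Together with the trivial contribution $1$ at $(k,l)=(0,0)$, the three pieces sum coefficient-wise to $\tM(x,t)$.

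\textbf{Main obstacle.} The heart of the argument is the depth-two claim for cusp forms. A product of two depth-one brackets already lies in $\fild_2(\zq)$ via the stuffle identity $[a]\cdot[b]=[a,b]+[b,a]+[a+b]$, so in particular $\Delta=\tfrac{1}{1728}(E_4^3-E_6^2)$ sits in $\fild_2$: the term $E_6^2$ is a product of two brackets, and for $E_4^3$ one uses the level-one $q$-series identity $E_4^2=E_8$ (forced by $\dim M_8=1$) to rewrite $E_4^3=E_4\cdot E_8$ as a product of two depth-one elements. To extend to arbitrary weights I would invoke Ramanujan's relations
\[
12\,q\tfrac{d}{dq}E_2=E_2^2-E_4,\quad 3\,q\tfrac{d}{dq}E_4=E_2E_4-E_6,\quad 2\,q\tfrac{d}{dq}E_6=E_2E_6-E_4^2,
\]
combined with $E_4E_6=E_{10}$, which rewrite every degree-two monomial in $G_2,G_4,G_6$ as a depth-one element of $\widetilde M(\operatorname{SL}_2(\Z))_\Q$. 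Inducting on the degree of a monomial $G_{a_1}\cdots G_{a_n}$ by replacing any quadratic sub-product with its depth-one expression cuts the degree at each step and ultimately places every element of $\widetilde M(\operatorname{SL}_2(\Z))_\Q$ -- hence every cusp form -- in $\fild_2(\zq)$. Once this algebraic reduction is in hand, the remaining assembly of the three generating series is routine.
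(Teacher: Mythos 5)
Your overall architecture is the same as the paper's: both arguments run through the decomposition \eqref{qmf:decomp}, place the Eisenstein series and their derivatives in depth one, observe that $q\frac{d}{dq}$ preserves the depth, and reduce everything to the single containment $S_k(\operatorname{SL}_2(\Z))_\Q\subset\filwle_{k,2}(\zq)$. Your bookkeeping of the three generating series is also correct. The containment itself is true, but your proposed proof of it does not work.

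The paper obtains it in one line by citing a theorem of Zagier (Rankin; Kohnen--Zagier): every cusp form of weight $k$ for $\operatorname{SL}_2(\Z)$ is a $\Q$-linear combination of products $G_aG_{k-a}$ of \emph{two} Eisenstein series, and each $G_a$ is a constant plus the depth-one bracket $[a]$, so $S_k(\operatorname{SL}_2(\Z))_\Q\subset\filwle_{k,2}(\zq)$. Your substitute -- that Ramanujan's relations together with $E_4^2=E_8$ and $E_4E_6=E_{10}$ ``rewrite every degree-two monomial in $G_2,G_4,G_6$ as a depth-one element'' -- is false for $G_6^2$: since $\dim_\Q M_{12}(\operatorname{SL}_2(\Z))=2$, one has $E_6^2=E_{12}+c\,\Delta$ with $c\neq0$, and $\Delta$ is not (provably, nor conjecturally) a depth-one element of $\zq$. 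This failure is not incidental: the entire reason the cusp forms sit in the $\Dx(x)\Sx(x)\,t^2$ term rather than the $t^1$ term is that they first appear in depth two, so no reduction whose every step lands back in the depth-one (Eisenstein) line can succeed; indeed, if your degree-two claim held, the induction would place all of $\widetilde{M}(\operatorname{SL}_2(\Z))_\Q$ in $\fild_1(\zq)$, which already contradicts the stated bound at the coefficient of $x^{12}t$. Concretely your induction stalls at $G_6^3$, whose only quadratic sub-product is the irreducible $G_6^2$, and more generally at monomials $G_4^aG_6^b$ of weight $\ge 16$, since each pairwise substitution would need its output to be a pure Eisenstein series and the cuspidal components obstruct this from weight $12$ onward. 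Your treatment of $\Delta$ itself (via $E_4^3=E_4\cdot E_8$ and $E_6\cdot E_6$) is fine, but to cover all weights you need the spanning theorem for products of two Eisenstein series, exactly as the paper invokes it.
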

	\begin{proof}
		The Eisenstein series and their derivatives are in the depth one subspaces.   For the space of cusp forms of weight $k$ we have, by using a theorem of Zagier,
		\[ S_{k}(\operatorname{SL}_2(\Z))_\Q 
		\subset \big \langle   G_{k-a} G_a \,| a=0,\dots,\,k/2 \,\big \rangle_\Q 
		\subset
		\filwle_{k,2}(\zq).
		\] 
		Finally since $q\frac{d}{dq}$ does not alter the depth we get the claim by the decomposition \eqref{qmf:decomp}.
	\end{proof}
	
	The expected equality in Proposition \ref{prop:qmf} would hold if the brackets $[2],\,[4],\,[6]$ and the odd brackets $[1],\,[3],..$ together with all of their derivatives were algebraically independent, but by now only partial results for linear independence are available (\cite{pupyrev},\cite{Zu3}). 
		
	\begin{conjecture}  \label{conj-decomposition}
		We have a decomposition of $\mathbb{Q}$-algebras
		\[
		\zq \cong  \widetilde{M}_\Q(\operatorname{SL}_2(\Z))  \otimes \calA.
		\]
		This decomposition is respected by the operator $q\frac{d}{dq}$. Moreover		
		$\calA$ is a free polynomial algebra that is bi-graded with respect to weight and depth compatible with those of $\zq$. In particular it equals the  
		 graded dual to the universal enveloping algebra of a bi-graded Lie algebra\footnote{Some authors prefer to denote this as the symmetric algebra of a Lie algebra}.
	\end{conjecture}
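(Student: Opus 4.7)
The plan is to assemble the statement in three layers, of which only the last is purely structural while the first two rest on independent conjectural inputs. First, one must upgrade Proposition \ref{prop:qmf} to an equality, i.e.\ realise $\widetilde{M}(\operatorname{SL}_2(\Z))_\Q \hookrightarrow \zq$ with precisely the expected bi-graded dimensions $\tM(x,t)$. This amounts to algebraic independence of $[2],[4],[6]$ and of the odd brackets $[1],[3],\dots$ together with their $q\frac{d}{dq}$-derivatives inside $\zq$; presently only the partial results of \cite{pupyrev}, \cite{Zu3} are known. Granting this, I would construct $\calA$ as a bi-graded algebra section of the quotient map $\zq\twoheadrightarrow \zq/\widetilde{M}^{+}\zq$, where $\widetilde{M}^{+}$ is the augmentation ideal, chosen so that the multiplication map $\widetilde{M}\otimes\calA\to\zq$ is an isomorphism. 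A candidate spanning set for $\calA$ is a family of ``reduced'' bi-brackets $\mb{s_1,\dots,s_l}{r_1,\dots,r_l}$ selected, via the double-shuffle type relations of \cite{Ba2}, \cite{Ba5}, so as to be closed under multiplication modulo $\widetilde{M}^{+}\zq$.

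To arrange compatibility with the derivation, one must refine the choice of $\calA$ to a $q\frac{d}{dq}$-stable complement. Since $q\frac{d}{dq}$ preserves depth, raises weight by $2$, and maps $\widetilde{M}$ into itself by the Ramanujan--Serre formulas, such a stable complement can be built inductively by weight: in each bi-degree one picks new generators not lying in the image of the $\widetilde{M}$-action plus the $\calA$-action from lower weights, and includes their entire $q\frac{d}{dq}$-orbits. The quantitative bookkeeping for the induction is precisely the decomposition in Conjecture \ref{conj:bkforbibra}(ii): the numerator $\tM(x,y)$ is reserved for the modular factor while the denominator $1-\sera_1(x)y+\sera_2(x)y^2-\sera_3(x)y^3-\sera_4(x)y^4+\sera_5(x)y^5$ dictates what is available for $\calA$.

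To obtain the free-polynomial, equivalently graded-dual-of-a-UEA, structure on $\calA$, I would invoke the Cartier--Milnor--Moore theorem: it is enough to endow $\calA$ with a cocommutative bi-graded Hopf algebra structure and to verify that its Hilbert series equals the reciprocal of the denominator of Conjecture \ref{conj:bkforbibra}(ii). A candidate coproduct should come from a deconcatenation-type operation on reduced bi-brackets, analogous to the motivic coproduct on MZVs; it is natural on $\calA$ precisely because the non-free quasi-modular factor has been removed. The main obstacle is that each layer is itself conjectural---(a) the algebraic independence of quasi-modular forms inside $\zq$, (b) the sharp upper bounds on $\dim\grwl_{k,l}\zq$ matching Conjecture \ref{conj:bkforbibra}(ii), and (c) the construction of a motivic-type coproduct descending to $\calA$. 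A realistic outcome of the plan is therefore a \emph{reduction} of Conjecture \ref{conj-decomposition} to Conjecture \ref{conj:bkforbibra}(ii) together with an independence input for $\widetilde{M}\hookrightarrow\zq$ and the existence of a Hopf-type coproduct on $\calA$.
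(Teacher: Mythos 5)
The statement you are asked to prove is Conjecture \ref{conj-decomposition}; the paper offers no proof of it, only numerical evidence (Tables \ref{table:freepoly} and \ref{table:exprange}) and the heuristic factorizations in the remark following the conjecture. Your proposal, to its credit, does not pretend otherwise: it is explicitly a reduction of the conjecture to (a) algebraic independence of the Eisenstein-type brackets and their derivatives (which the paper itself flags as open after Proposition \ref{prop:qmf}, citing only the partial results of \cite{pupyrev}, \cite{Zu3}), (b) the sharp dimension formula of Conjecture \ref{conj:bkforbibra}(ii), and (c) the existence of a suitable coproduct. As such it cannot be graded as a proof, because nothing is actually established; but as a roadmap it is broadly consistent with what the authors themselves envisage, with one notable divergence: the paper's closing remark points to Ecalle's flexion structure (\cite{E}, \cite{schneps:arigari}) as the source of the Lie algebra whose symmetric algebra should be $\calA$, realized inside the partition shuffle spaces $\mathbb{PS}(k-l,l)$, whereas you propose a deconcatenation/motivic-type coproduct on ``reduced'' bi-brackets. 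The ARI/GARI route has the advantage that the Lie structure on the double-shuffle-type spaces is already a theorem, so only the identification of the relevant sub Lie algebra and the dimension count remain; your coproduct would have to be constructed from scratch.

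Beyond the admitted conjectural inputs, there is a structural gap in the middle layer of your plan. Knowing the bi-graded dimensions of $\zq$ and of $\widetilde{M}(\operatorname{SL}_2(\Z))_\Q$, even exactly, does not by itself yield a multiplicative section of $\zq \twoheadrightarrow \zq/\widetilde{M}^{+}\zq$ with $\widetilde{M}\otimes\calA\to\zq$ an isomorphism of algebras: that requires $\zq$ to be free as a module over the quasi-modular forms, i.e.\ the absence of any algebraic relation mixing $\widetilde{M}$ with the rest of $\zq$, which is a strictly stronger statement than the Hilbert series identity of Conjecture \ref{conj:bkforbibra}(ii). Likewise the inductive construction of a $q\frac{d}{dq}$-stable complement presumes at each step that the new generators can be chosen with free $q\frac{d}{dq}$-orbits, which is again an independence assertion, not bookkeeping. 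If you state your result as a reduction, these two freeness hypotheses must be listed explicitly alongside (a), (b), (c); with them included, the reduction is sound but the list of inputs is essentially the conjecture itself restated.
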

	
This decomposition of algebras should be seen as an analogue of   \cite[ Conjecture 1.1.~b)]{Go}  
	in our context. 
	The conjecture above implies the weaker claim, that the algebra $\zq$ is isomorphic to a free polynomial algebra graded by the weight. It also implies that in Proposition \ref{prop:qmf} 
	the equality holds.
		
	\begin{remark}	In \cite{O} Okounkov gives the following conjecture for the dimension of the weight-graded parts of $\zqc[1]$. 
		\begin{align}\label{eq:okdim}
		\!\!\!\!\!\!\!\!\!  \sum_{k\geq 0}  \dim_\Q \left( \grw_k \zqc[1] \right)   \, x^k \overset{?}{=} \frac{1}{ 1 -x^2 -x^3 -x^4 -x^5 + x^8 +x^9+  x^{10}+  x^{11} +  x^{12} }.
		\end{align}
		We expect that the decomposition of  Conjecture \ref{conj-decomposition} 
		induces also a decomposition for $\zqc[1]$. Indeed, keeping the previous notation, this is compatible with the factorization   
		\[	
		\frac{1}{ 1 -x^2 -
		%x^3 -x^4 
		\ldots
		-x^5 + x^8 +
		%x^9+  x^{10}+  x^{11} 
		\ldots +  x^{12} }  
		=  \tM(x,1) \frac{1} { 1-  \Dx(x) \odd_3(x) + 2 \Dx(x)\Sx(x)  }  \,.
		\]
		Our Conjecture \ref{conj:bkforbibra} (i) for $\zq$ yields with $\ev_4(x)= x^4/(1-x^2)$
		\[
		\frac{1}{ 1 -x -x^2-x^3 + x^6 +
		%x^7 +x^8 
		\ldots
		+x^9 } 
		=  \tM(x,1) \frac{1} { 1 - \Dx(x)   \odd_1(x) + \Dx(x) 
			\big( \ev_4(x) + 2 \Sx(x)\big)  }. 
		\]
		Thus we may think of the Lie algebra behind $\zq$ compared to that behind $\zqc[1]$ as having additional generators induced by the derivatives of a generator in weight $1$ and having additional relations being counted by the number of Eisenstein series for $\operatorname{SL}_2(\Z)$ and their derivatives.	
	\end{remark}

	\section{Computational evidences for the conjectures}
	
	In this section we want to describe how to implement the bi-brackets to obtain the numerical results, which were used to obtain Conjecture \ref{conj:bkforbibra} in the introduction and further conjectures stated below.  A similar method
	to perform such calculations has been communicated to us by Don Zagier.
	
	Using \eqref{eq:bibracinP} we define for a fixed $N\in \N$  an approximated version of bi-brackets by
	\begin{align}\label{def:brf}
	\mb{s_1, \dots , s_l}{r_1,\dots,r_l}_N := \sum_{N\geq
		n_1>\cdots>n_l>0} \prod_{j=1}^l  \left( \frac{n_j^{r_j}}{r_j!} \cdot \frac{q^{n_j}  P_{s_j-1}(q^{n_j}) }{(s_j-1)! \cdot (1-q^{n_j})^{s_j}} \right)  \in\Q[[q]].
	\end{align}
	Observe that \( \mb{s_1, \dots , s_l}{r_1,\dots,r_l}_N= 0 \) for   \( N<l \). It is clear that at least the first $N$ coefficients of these approximated versions are identical to the bi-brackets, i.e.
	\[ 
	\mb{s_1, \dots , s_l}{r_1,\dots,r_l}_N \equiv  \mb{s_1, \dots , s_l}{r_1,\dots,r_l} \mod q^{N+1}.
	\]
	
	To calculate the first $N$ coefficients of the bi-brackets we use the following recursive formula for these approximated versions
	\begin{lemma}\label{lem:approx_recursion}
		For all $s_1,\dots,s_l, r_1, \dots , r_l$ and $N\geq l$ we have
		\begin{align*}
		\mb{s_1, \dots , s_l}{r_1,\dots,r_l}_N=   \mb{s_1, \dots , s_l}{r_1,\dots,r_l}_{N-1}
		+\frac{N^{r_1}}{r_1!}  \frac{q^N P_{s_1-1}(q^N)}{(s_1-1)!\cdot{(1-q^N)}^{s_1}}    \mb{s_2, \dots , s_l}{r_2,\dots,r_l}_{N-1} \,,
		\end{align*}
		where we set $\mb{s_2, \dots , s_l}{r_2,\dots,r_l}_{N-1}=1$ for $l=1$. 
	\end{lemma}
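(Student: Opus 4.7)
The plan is to prove the identity by splitting the outermost summation in the definition \eqref{def:brf} according to the value of $n_1$. For $N \geq l$, the index set $\{(n_1,\ldots,n_l) : N \geq n_1 > n_2 > \cdots > n_l > 0\}$ decomposes as the disjoint union
\[
\{(n_1,\ldots,n_l) : N-1 \geq n_1 > n_2 > \cdots > n_l > 0\}\;\sqcup\;\{(N, n_2, \ldots, n_l) : N-1 \geq n_2 > \cdots > n_l > 0\},
\]
and the condition $N \geq l$ guarantees that both pieces are non-empty in a consistent way (in the second piece, one needs $N-1 \geq l-1$ for there to exist admissible $n_2 > \cdots > n_l > 0$).

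First I would observe that summing the integrand over the first piece yields precisely $\mb{s_1,\dots,s_l}{r_1,\dots,r_l}_{N-1}$ by definition. Next, on the second piece, the factor in the product for $j=1$ specializes with $n_1 = N$ to
\[
\frac{N^{r_1}}{r_1!}\cdot \frac{q^N P_{s_1-1}(q^N)}{(s_1-1)!\,(1-q^N)^{s_1}},
\]
which is independent of $n_2,\ldots,n_l$ and may be pulled outside the remaining sum. The residual sum runs over $N-1 \geq n_2 > \cdots > n_l > 0$ of the product for $j=2,\ldots,l$, which matches the defining sum of $\mb{s_2,\dots,s_l}{r_2,\dots,r_l}_{N-1}$ (after relabelling $n_j \to n_{j-1}$). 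Combining the two contributions gives the stated formula.

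For the base case $l=1$, the second piece has no residual indices, and the convention $\mb{\,}{\,}_{N-1} = 1$ (the empty product) makes the formula consistent. I expect no real obstacle here: the statement is a direct consequence of isolating the top term $n_1 = N$ in a strictly decreasing sequence, and the only minor bookkeeping point is to check that the lower bound $N \geq l$ is exactly what allows the second piece to contain valid tuples (namely, $N - 1 \geq l - 1$), so that no empty-sum convention is needed beyond the $l=1$ case.
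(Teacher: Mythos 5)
Your proof is correct and is exactly the paper's argument: the paper likewise splits the summation $N \geq n_1 > \cdots > n_l > 0$ into the parts with $n_1 < N$ and $n_1 = N$, yielding the two terms respectively. Your additional bookkeeping about the role of $N \geq l$ and the $l=1$ convention is fine and consistent with the definitions.
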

	\begin{proof}This follows by splitting up the summation $N\geq
		n_1>\cdots>n_l>0$ into the parts where $N > n_1$ and $N=n_1$ to get the first and the second term respectively. 
	\end{proof}

	We implemented an algorithm based on Lemma \ref{lem:approx_recursion} in parallel PARI/GP  \cite{PARI2} and on a computer with 32 cores  
	it takes several hours to obtain each of the following tables:

	\begin{table}[H]\footnotesize
		\resizebox{0.9\textwidth}{!}{
			\begin{tabular}  
				{c|cccccccccccccc} $k \backslash l$&1&2&3&4&5&6&7&8&9&10&
				11&12&13&14\\ \hline
				\noalign{\medskip}1&2&0&0&0&0&0&0&0&0&0&0&0&0&0
				\\ \noalign{\medskip}2&3&4&0&0&0&0&0&0&0&0&0&0&0&0
				\\ \noalign{\medskip}3&5&7&8&0&0&0&0&0&0&0&0&0&0&0
				\\ \noalign{\medskip}4&7&12&14&15&0&0&0&0&0&0&0&0&0&0
				\\ \noalign{\medskip}5&10&19&25&27&28&0&0&0&0&0&0&0&0&0
				\\ \noalign{\medskip}6&13&30&41&48&50&51&0&0&0&0&0&0&0&0
				\\ \noalign{\medskip}7&17&44&68&81&89&91&92&0&0&0&0&0&0&0
				\\ \noalign{\medskip}8&21&65&106&138&153&162&164&165&0&0&0&0&0&0
				\\ \noalign{\medskip}9&26&90&167&223&264&281&291&293&294&0&0&0&0&0
				\\ \noalign{\medskip}10&31&126&249&366&439&490&509&520&522&523&0&0&0&0
				\\ \noalign{\medskip}11&37&167&376&571&738&830&892&913&925&927&928&0&0
				&0\\ \noalign{\medskip}12&43&222&537&905&1190&1418&1531&1605&1628&1641
				&1643&1644&0&0\\ \noalign{\medskip}13&50&285&778&1364&1948&2344&2645&
				2781&2868&2893&2907&2909&2910&0\\ \noalign{\medskip}14&57&368&1075&
				2090&3051&3923&4453&4840&5001&5102&5129&5144&5146&5147  \end{tabular} 
		}
		\caption{lower bounds $\operatorname{fil}^{\num}_{k,l}(\zq)$ for $\dim_\Q  \filwle_{k,l}(\zq)$ with   depth $\le 14$}

	\end{table}
	
	\begin{table}[H]\footnotesize
		\resizebox{1.0\textwidth}{!}{
			\begin{tabular}{c|ccccccccccccccccccccc} $l \backslash k$&1&2&3&4&5&6&7&
				8&9&10&11&12&13&14&15&16&17&18&19&20&21\\ \noalign{\medskip}1&2&3&5&7&
				10&13&17&21&26&31&37&43&50&57&65&73&82&91&101&111&122
				\\ \noalign{\medskip}2&0&4&7&12&19&30&44&65&90&126&167&222&285&368&460
				&577&706&866&1041&1254&1485\\ \noalign{\medskip}3&0&0&8&14&25&41&68&
				106&167&249&376&537&778&1075&1503&2017&2737&3584&4739&6077&7859
				\\ \noalign{\medskip}
				4& 
				0 & 0& 0&15& 27&48& 81& 138& 223& 366& 571& 905& 1364&2090& 3053& 4535& 6440& 9293 &
				?&?&? 
			\end{tabular} 
		}
		\caption{lower bounds $\operatorname{fil}^{\num}_{k,l}(\zq)$ for $\dim_\Q  \filwle_{k,l}(\zq)$ with   depth $\le 4$} 
	\end{table}
	
	In fact for these tables we calculated approximated  bi-brackets with coefficients modulo some large prime and determined the  dimension they span at least. Experimentally the choice of a sufficiently large prime does not alter the dimension. We have similar tables for various subspaces like  the \emph{positive bi-brackets}
	\[
	\zq^+ = \big \langle   \mb{s_1, \dots , s_l}{r_1,\dots,r_l}  \in \zq \,   \big| \, 
	l \ge 0,\, s_1 > r_1, \dots,\, s_l>r_l  
	\big\rangle_\Q  
	\]
	or the space of \emph{$123$-brackets} given by
	\[
	\big \langle  [s_1,\dots,s_l]  \,\big | \, l \ge 0, \, s_1,\dots,s_l \in \{1,2,3\} \,\big\rangle_\Q \subset \zqc 
	\] 
	and for sub-algebras like $\zqc$ or $\zqc[1]$. This lead us to the following conjectures
	
	\begin{conjecture}\phantomsection\label{conj:bconj}
		\begin{enumerate} 
			\item[(B1)] Every bi-bracket equals a linear combination of positive bi-brackets		
			\item[(B1*)]	More precisely,  the space of positive bi-brackets $\zq^{+}$  
			fulfills
			$ \operatorname{Fil}^{\operatorname{W}, \operatorname{L}}_{w,l}(\zq^{+})  
			= \operatorname{Fil}^{\operatorname{W}, \operatorname{L}}_{w,l}(\zq) $. 
			\item[(B2)] Every bi-bracket equals a linear combination of brackets, i.e. $\zqc = \zq$. 
			\item[(B3)] Every bracket equals a linear combination of $123$-brackets. 
		\end{enumerate}
	\end{conjecture}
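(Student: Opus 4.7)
The conjecture comprises four related claims about spanning sets for $\zq$ and $\zqc$. A reasonable order of attack is to establish (B1*), deduce (B1) as an immediate corollary, and then attack (B2) and (B3) separately, as (B2) strengthens (B1) and (B3) refines the result of Theorem \ref{thm:bdasqmzv}(ii).

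For (B1*) and (B1) the starting point is the depth-one duality $\mb{s}{r} = \mb{r+1}{s-1}$, obtained by swapping the summation variables $u \leftrightarrow v$ in the defining sum \eqref{eq:defbibracketintro}. When $s \leq r$ this already produces a positive depth-one bi-bracket of the same weight. In higher depth I would argue by induction on the non-positivity defect
\[
\delta(\mathbf{s},\mathbf{r}) := \sum_{j=1}^{l} \max(0, r_j - s_j + 1),
\]
the depth-one swap handling the case $l=1$, and the stuffle product of bi-brackets developed in \cite{Ba2, Ba4} being used to rewrite higher-defect components as lower-defect ones plus corrections of strictly smaller depth. The delicate point for (B1*) is that the weight-depth bidegree must be preserved throughout the reduction, which should follow from the filtration-compatibility of the stuffle product together with the observation that the depth-one swap leaves the total weight $s+r$ invariant.

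For (B2) I would expand each factor $u_j^{r_j}/r_j!$ via the polynomials $p_m$ introduced in the proof of Theorem \ref{thm:bdasqmzv}, converting the $u_j$-dependence into nested sums over auxiliary variables; shuffling these auxiliary variables into the chain $u_1 > \cdots > u_l$ and resolving collisions through the identity $1 = (1-q^n)(1-q^n)^{-1}$ used there should rewrite the bi-bracket as a combination of $\zeta_q$'s lying in $\zqc$, hence by Theorem \ref{thm:bdasqmzv}(ii) as a combination of brackets. For (B3) I would emulate Brown's inductive philosophy from \cite{Brown}: use the double shuffle relations for brackets from \cite{BK} to show, by induction on weight and for fixed weight on depth, that every bracket $[s_1,\ldots,s_l]$ with some $s_j \geq 4$ reduces modulo $123$-brackets and brackets of strictly smaller depth.

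The main obstacle is the same in all four parts. The stuffle side of the double shuffle structure produces abundant relations routinely, but the shuffle product for bi-brackets requires partial-fraction manipulations with Eulerian-polynomial coefficients and is far less tractable. Producing enough explicit relations on the shuffle side to collapse the spanning set down to the numerically observed dimensions seems to require a substantially better algebraic understanding, which is why these statements remain conjectures. In particular (B3) is at least as hard as Hoffman's conjecture for $\MZ$ in the limit $q \to 1$, and would likely need a motivic framework analogous to that of \cite{Brown}.
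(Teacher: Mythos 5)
The statement you were given is a \emph{conjecture}; the paper contains no proof of it. The only thing established there is Theorem \ref{thm:upperBK}, which verifies (B1), (B2) and (B3) in weights $k\le 7$ by squeezing the dimensions between an upper bound (built from the ring of quasi-modular forms and the partition shuffle spaces of Corollary \ref{ps-bounds}) and the numerically computed lower bounds. Your text is likewise not a proof but a programme, and you concede as much in your final paragraph; that is honest, but it means nothing here settles any of (B1)--(B3), and the review must be that no proof has been produced on either side.

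Beyond that, one of your steps is concretely broken. For (B2) you claim that expanding $u_j^{r_j}/r_j!$ through the polynomials $p_m$ and resolving collisions via $1=(1-q^N)(1-q^N)^{-1}$ ``should rewrite the bi-bracket as a combination of $\zeta_q$'s lying in $\zqc$''. It does not: this is exactly the computation carried out in the proof of Theorem \ref{thm:bdasqmzv}(i), and the collision-resolution step necessarily introduces polynomials with non-zero constant term --- the factor $1-t$ in \eqref{example-fildefect}, where $\mb{1,1}{0,1}=\zeta_q(1,1;t,t)+\zeta_q(1,1,1;t,t,1-t)$, is the simplest instance --- so the output lies in $\zq$ but not visibly in $\zqc$. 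Eliminating those constant terms is precisely the content of (B2); if your reduction worked it would already be a theorem, whereas the paper stresses that even the weaker claim that the $123$-brackets span a subalgebra is out of reach and that a Brown-style argument would require a coproduct on $\zq$ that is only an ``optimistic hope''. Your depth-one identity $\mb{s}{r}=\mb{r+1}{s-1}$ is correct (it is the partition relation in depth one), but the inductive reduction of your defect $\delta$ in higher depth is asserted rather than carried out, and nothing in the paper indicates that the stuffle product alone, or even the full double shuffle structure, suffices to perform it while preserving the weight--depth bifiltration as (B1*) demands.
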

	Although, our experiments support conjectures (B1) and (B3), we were not able to prove the weaker claims that  the positive bi-brackets respectively the $123$-brackets generate  sub-algebras of $\zq$. In \cite{Ba2} the conjecture (B2) was stated the first time and therein examples which complement those in \cite{ems} (Theorem 5.5)  were given.

	\begin{theorem}\label{thm:upperBK}  For all weights  $k \le 7$ the coefficients 
	on both sides of Conjecture \ref{conj:bkforbibra} (i) coincide and in addition
	the Conjectures \ref{conj:bconj} (B1), (B2) and (B3) hold for these weights also. 
	\end{theorem}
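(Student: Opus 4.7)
The statement has two components: matching the generating series of Conjecture \ref{conj:bkforbibra} (i) through weight $7$, and verifying Conjectures \ref{conj:bconj} (B1), (B2), (B3) in these weights. From the recursion encoded in the denominator $1-x-x^2-x^3+x^6+x^7+x^8+x^9$, the target values $a_k = \dim_\Q \grw_k \zq$ are $1,1,2,4,7,13,23,41$ for $k = 0,\ldots,7$, with cumulative sums $1,2,4,8,15,28,51,92$ that match the depth-saturated column of the first table. The plan is to pin $\dim_\Q \grw_k \zq$ between these numbers from below and from above, and then to repeat the computation inside each of the three subspaces appearing in (B1), (B2), (B3).

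The lower bound is furnished by the numerical experiment already described: using Lemma \ref{lem:approx_recursion}, compute the first $N$ Fourier coefficients of every bi-bracket of weight $\le 7$ modulo a large prime $p$, for $N$ large enough that the rank of the resulting matrix of coefficients stabilizes, and verify that this rank equals $a_k$ in each weight. Taking $p$ sufficiently large ensures $\dim_\Q \grw_k \zq \ge a_k$.

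For the matching upper bound I would assemble a finite linear system of relations among bi-brackets of weight $\le 7$ from all presently known families: the stuffle product of bi-brackets, the shuffle-type relations obtained from the alternative representation, the partition relation, the derivative operator $q\tfrac{d}{dq}$, and the depth-reduction identities extracted from the proof of Theorem \ref{thm:bdasqmzv} and from equation \eqref{example-fildefect}; see \cite{Ba2}, \cite{Ba4}, \cite{Ba5}, \cite{BK}. Because the set of bi-brackets of weight $\le 7$ is finite and short, a direct Gaussian elimination over $\Q$ collapses this system to a spanning set of size $a_k$ in each weight, establishing $\dim_\Q \grw_k \zq \le a_k$. Combined with the lower bound this proves Conjecture \ref{conj:bkforbibra} (i) for $k \le 7$. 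The same procedure, restricted to the subspaces spanned respectively by the positive bi-brackets, by the brackets, and by the $123$-brackets, yields lower bounds that, together with the upper bound $\dim_\Q \grw_k \zq \le a_k$, are forced to be equalities, proving (B1), (B2) and (B3) in these weights.

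The main obstacle is the upper bound: no theoretical result guarantees that the presently known families of relations exhaust all linear relations between bi-brackets, so the heart of the argument is a finite but delicate computer verification that in each weight $k \le 7$ they already collapse the generating set to dimension exactly $a_k$. One must also ensure that every relation used holds exactly over $\Q$ and not merely numerically modulo $p$, which is achieved by exhibiting each of them symbolically from the identity families cited above. The theorem stops at $k = 7$ because from weight $8$ onward the known relations continue to agree with the numerical lower bound but unknown further relations in the considerably larger search space cannot yet be ruled out.
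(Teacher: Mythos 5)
Your lower bound is exactly the paper's: compute truncated $q$-expansions via Lemma \ref{lem:approx_recursion}, take ranks modulo a large prime, and observe that the smallest subspace in the chain (the $123$-brackets, which sit inside $\zqc$, inside the positive bi-brackets, and inside $\zq$) already realizes the conjectured filtered dimensions $1,2,4,8,15,28,51,92$ for $k\le 7$. The gap is in your upper bound. You propose to list ``all presently known families of relations'' among bi-brackets of weight $\le 7$ and run Gaussian elimination, but you never establish that this collapses the span to the target dimension --- you yourself flag this as ``the main obstacle'' and defer it to an unperformed, unspecified computation. That computation \emph{is} the theorem; asserting its outcome is not a proof. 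The paper's actual mechanism is different and concrete: Corollary \ref{ps-bounds} bounds the number of algebra generators of $\zq$ in weight $k$ and depth $l$ by $\dim_\Q \mathbb{PS}(k-l,l)$ (an Ihara--Kaneko--Zagier-style bound extracted from the partition relation and the quasi-shuffle product, modulo products and lower depth). Feeding the computed values $p_{k,l}$ of Table \ref{tab:gwl-bounds}, together with the known structure of $\Q[G_2,G_4,G_6]$ in depth $\le 1$, into the Hilbert series of a free polynomial algebra yields the explicit coefficient-wise upper bound $1+2x+4x^2+8x^3+15x^4+28x^5+51x^6+92x^7+166x^8+\cdots$ for $\sum_k \dim_\Q \filw_k(\zq)\,x^k$. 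The squeeze between this and the $123$-bracket lower bound proves (i), (B1), (B2) and (B3) simultaneously for $k\le 7$.

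Your explanation of why the theorem stops at $k=7$ is also backwards. The obstruction at weight $8$ is not that ``unknown further relations cannot be ruled out'' (further relations would only lower the dimension, and the rank computation already certifies $\ge 165$); it is that the known upper bound fails to meet the lower bound there: the partition-shuffle count gives $166$ in weight $8$ while the numerics give $165$, so the two bounds first disagree at $k=8$. If your proposed direct reduction really did ``continue to agree with the numerical lower bound'' in weight $8$, the theorem would extend to $k=8$ --- which is precisely what nobody can currently prove.
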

	We will give a proof of this theorem at the end of this section.

	The Conjecture \ref{conj:bkforbibra} is based on the assumption that the above lower bounds were the actual dimensions. In other words, for the quantities
	\[
	\gr_{k,l}^{\num}   = \operatorname{fil}_{k,l}^{\num}(\zq) - \operatorname{fil}_{k,l-1}^{\num}(\zq) - \operatorname{fil}_{k-1,l}^{\num}(\zq)
	+ \operatorname{fil}_{k-1,l-1}^{\num}(\zq)
	\] 
	we expect the equalities $\gr_{k,l}^{\num}   = \dim_\Q \grwl_{k,l}(\zq)$. 
	Now we check if the generating series of the weight- and depth-graded parts of $\zq$ can be  of the shape implied by the conjectures.  	
	For example, if we assume that there is a decomposition $\zq \cong \widetilde{M}(\operatorname{SL}_2(\Z)) \otimes \mathcal{A}$, where the algebra
	$\mathcal{A}$  is a free polynomial algebra, then there must hold an equation of the form  
	\[
	\sum_{k,l \ge 0} \dim_\Q (\gr_{k,l} (\zq) ) \, x^k y^l  = \tM(x,y)
	\cdot \prod_{k,l \ge 1}  \frac{1}{ (1-x^k  y^l)^{g_{k,l}}  } \,,\]
	where the $g_{k,l}$  equal the number of generators of $\mathcal{A}$ in weight $k$ and depth $l$. 
	Solving such an equation with with $\gr_{k,l}^{\num} $ on the left-hand side, give us 
	numerical $g_{k,l}^{\num}$ and within the range of our experiments 
	(See Table \ref{table:freepoly} on page \pageref{table:freepoly}) these are positive and satisfy a parity pattern.
	
	\begin{table}[H] 
		\resizebox{1.0\textwidth}{!}{\footnotesize
			\begin{tabular}{c|cccccccccccccc} $\gr^{\num}_{k \backslash l}$&1&2&3&4&5&6&7&8&9&10&11
				&12&13&14\\ \hline \noalign{\medskip}1&1&0&0&0&0&0&0&0&0&0&0&0&0&0
				\\ \noalign{\medskip}2&1&1&0&0&0&0&0&0&0&0&0&0&0&0
				\\ \noalign{\medskip}3&2&1&1&0&0&0&0&0&0&0&0&0&0&0
				\\ \noalign{\medskip}4&2&3&1&1&0&0&0&0&0&0&0&0&0&0
				\\ \noalign{\medskip}5&3&4&4&1&1&0&0&0&0&0&0&0&0&0
				\\ \noalign{\medskip}6&3&8&5&5&1&1&0&0&0&0&0&0&0&0
				\\ \noalign{\medskip}7&4&10&13&6&6&1&1&0&0&0&0&0&0&0
				\\ \noalign{\medskip}8&4&17&17&19&7&7&1&1&0&0&0&0&0&0
				\\ \noalign{\medskip}9&5&20&36&24&26&8&8&1&1&0&0&0&0&0
				\\ \noalign{\medskip}10&5&31&46&61&32&34&9&9&1&1&0&0&0&0
				\\ \noalign{\medskip}11&6&35&86&78&94&41&43&10&10&1&1&0&0&0
				\\ \noalign{\medskip}12&6&49&106&173&118&136&51&53&11&11&1&1&0&0
				\\ \noalign{\medskip}13&7&56&178&218&299&168&188&62&64&12&12&1&1&0
				\\ \noalign{\medskip}14&7&76&214&429&377&476&229&251&74&76&13&13&1&1
				\end {tabular} 
				\hspace{20pt} 
				\begin {tabular}{c|cccccccccccccc} $g^{\num}_{k \backslash l}$&1&2&3&4&5&6&7&8&9&10&11
				&12&13&14\\ \hline \noalign{\medskip}1&1&0&0&0&0&0&0&0&0&0&0&0&0&0
				\\ \noalign{\medskip}2&0&0&0&0&0&0&0&0&0&0&0&0&0&0
				\\ \noalign{\medskip}3&2&0&0&0&0&0&0&0&0&0&0&0&0&0
				\\ \noalign{\medskip}4&0&1&0&0&0&0&0&0&0&0&0&0&0&0
				\\ \noalign{\medskip}5&3&0&1&0&0&0&0&0&0&0&0&0&0&0
				\\ \noalign{\medskip}6&0&2&0&1&0&0&0&0&0&0&0&0&0&0
				\\ \noalign{\medskip}7&4&0&3&0&1&0&0&0&0&0&0&0&0&0
				\\ \noalign{\medskip}8&0&7&0&3&0&1&0&0&0&0&0&0&0&0
				\\ \noalign{\medskip}9&5&0&8&0&4&0&1&0&0&0&0&0&0&0
				\\ \noalign{\medskip}10&0&12&0&11&0&4&0&1&0&0&0&0&0&0
				\\ \noalign{\medskip}11&6&0&22&0&14&0&5&0&1&0&0&0&0&0
				\\ \noalign{\medskip}12&0&20&0&31&0&17&0&5&0&1&0&0&0&0
				\\ \noalign{\medskip}13&7&0&47&0&44&0&21&0&6&0&1&0&0&0
				\\ \noalign{\medskip}14&0&31&0&81&0&58&0&25&0&6&0&1&0&0 
			\end{tabular} 
		}
		\caption{Evidence for $\mathcal{A}$ being a free polynomial algebra }
		\label{table:freepoly}
	\end{table}
	If we assume that there is a decomposition $\zq \cong \widetilde{M}(\operatorname{SL}_2(\Z)) \otimes \mathcal{A}$, where the algebra	$\mathcal{A}$  is the graded dual to the universal enveloping algebra of a bi-graded Lie algebra, then there must hold an equation of the form 
	\[
	\sum_{k,l \ge 0} \dim_\Q (\grwl_{k,l}(\zq))  \, x^k y^l  = \tM(x,y)
	\cdot    \frac{1}{ 1- \sum_{k,l\ge 1} b_{k,l}\, x^k  y^l   }  \]
	with $b_{k,l} \in \Z$.    
	Solving such an equation with with $\gr_{k,l}^{\num}(\zq)$ on the left-hand side, give us 
	numerical $b_{k,l}^{\num}$ and within the range of our experiments (See Table \ref{table:exprange} on page \pageref{table:exprange}) these are as expected in Conjecture \ref{conj:bkforbibra} (ii)   
	{\small
		\begin{table}[H]  
			\begin{center}
				\begin{tabular}{c|c|c|c|c|c|c|c|c|c|c|c|c|c|c } 
					$l$ &1&2&3&4&5&6&7&8&9&10&11&12&13&14\\ \hline
					$k \le$ &63&31&21&19&15&14&14&14&14&14&14&14&14&14 \\  
				\end{tabular}
				\caption{Evidence for $\mathcal{A}$ being a symmetric algebra of a Lie algebra  }
				\label{table:exprange}
			\end{center}
	\end{table}}

	Whereas it is known that the numbers from Zagier's conjecture give upper bound for the dimensions in question, the knowledge about the Broadhurst-Kreimer conjecture is very little. The only known results are the following:
	
	\begin{theorem}[Euler, Ihara-Kaneko-Zagier, Goncharov, Ihara-Ochiai]   For $1 \leq l \leq 3$ the numbers $g_{k,l}$ of generators for $\MZ$ of weight $k$ and depth $l$ are not bigger	than implied by the Broadhurst-Kreimer conjecture.
	\end{theorem}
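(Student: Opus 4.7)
The plan is to treat each depth $l \in \{1,2,3\}$ separately, producing an upper bound on $g_{k,l}$ that matches the coefficient predicted by the Broadhurst--Kreimer series of Conjecture \ref{conj:brokrei}. The arguments at depths $1$ and $2$ are classical; the depth-$3$ case requires deeper motivic input and is where the main difficulty lies.

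At depth $l=1$, I would invoke Euler's identity $\zeta(2n) \in \Q\cdot \zeta(2)^n$. Since $\zeta(2)$ is already recorded by the numerator term $\ev_2(x)\,y$ of the BK series, every depth-one generator of $\MZ$ beyond $\Q[\zeta(2)]$ must be an odd zeta $\zeta(2m+1)$ with $m \geq 1$, giving at most one generator per odd weight $\geq 3$ and hence $g_{k,1} \le [x^k]\, \odd_3(x)$, exactly as predicted.

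At depth $l=2$, the plan is to appeal to the Ihara--Kaneko--Zagier analysis of regularized double shuffle in depth two. After passing to the antisymmetric part in the two weight-variables, the double shuffle relations give a linear map on homogeneous polynomials of degree $k-2$ in two variables whose cokernel is identified, via Eichler--Shimura, with the space of even period polynomials for $\operatorname{SL}_2(\Z)$ of weight $k$, which has dimension $\dim_\Q S_k(\operatorname{SL}_2(\Z))$. Hence the number of new depth-two generators in weight $k$ is bounded above by $[x^k]\,\Sx(x)$, matching the $\Sx(x)\,y^2$ term of the BK denominator.

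The main obstacle is depth $l=3$. Here I would combine Goncharov's depth-three analysis of the motivic fundamental Lie algebra with Ihara--Ochiai's explicit families of relations, proceeding in three steps: (i) produce a family of depth-three relations indexed by cusp forms tensored with a depth-one element, accounting for the $\Sx(x)\,y^4$ term in the BK denominator; (ii) collect the remaining triple shuffle and stuffle relations (the linearized triple shuffle in the sense of Goncharov); and (iii) show via a Lie-algebra cohomology / Euler-characteristic computation with respect to the depth filtration on the motivic Lie algebra that after imposing the relations from (i)--(ii) no further generators at depth three are required. Step (iii) is the hardest: it amounts to ruling out spurious depth-three syzygies, and is the step where the motivic machinery (coaction, unipotent Galois action) is essential to control what a priori could be extra depth-three primitives.
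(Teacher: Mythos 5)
Your depth-1 argument is exactly the paper's (Euler's formula), and your depth-2 route via Ihara--Kaneko--Zagier is in the spirit of what the paper does: its entire proof consists of bounding $g_{k,l}$ by the dimension of the depth-$l$ \emph{double shuffle space} and citing \cite{IKZ}, \cite{Go}, \cite{IO} for the computation of those dimensions. But your depth-2 bookkeeping is off: the bound you assert, $g_{k,2} \le [x^k]\,\Sx(x) = \dim_\Q S_k(\operatorname{SL}_2(\Z))$, is \emph{not} what Broadhurst--Kreimer implies. The term $\Sx(x)\,y^2$ in the denominator counts \emph{relations} (dually, the Gangl--Kaneko--Zagier-type relations among double zetas indexed by cusp forms), and after separating off symmetric products of odd zetas the implied generator count at depth 2 is
\[
g_{k,2} \;=\; [x^k]\Bigl(\tfrac{1}{2}\bigl(\odd_3(x)^2 - \odd_3(x^2)\bigr) - \Sx(x)\Bigr) \;=\; \bigl\lfloor (k-2)/6 \bigr\rfloor \quad (k \text{ even}),
\]
e.g.\ two generators in weight $16$, whereas $\dim_\Q S_{16} = 1$. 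What IKZ actually compute is the dimension of the \emph{solution space} of the linearized double shuffle equations in two variables, and that is what equals the BK count; the period polynomials enter on the cokernel side, not as the count itself.

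The more serious problem is depth 3, where your plan is the wrong kind of argument. For an \emph{upper} bound no motivic machinery is required: regularized double shuffle is a theorem for real multiple zeta values, any depth-3 algebra generator taken modulo products and lower-depth terms produces an element of the depth-3 linearized double shuffle space, so $g_{k,3}$ is bounded by the dimension of a concretely defined space of polynomials in three variables. Ihara--Ochiai computed this dimension by elementary linear algebra, and Goncharov obtained it via the cohomology of (the relevant arithmetic groups attached to) modular varieties; in both cases the answer matches the BK prediction, unconditionally. Your step (iii) --- ruling out ``spurious depth-three syzygies'' using the coaction and the unipotent Galois action --- confuses the upper bound with exactness: controlling whether \emph{extra relations} cut the generator count further (or whether the bound is attained) is precisely the open direction of the Broadhurst--Kreimer conjecture, so as written your plan does not close, and it is unnecessary for the statement at hand. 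Note also that the $\Sx(x)\,y^4$ term you try to account for in your step (i) sits in depth $4$ and plays no role in bounding $g_{k,l}$ for $l \le 3$.
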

	
	The proof of this result for $l=1$ is a trivial consequence of Euler's formula for even zeta values. For $l=2,3$ one can bound the number of generators by the dimension of the so called double shuffle spaces, see e.g. \cite{IKZ}, \cite{Go}, \cite{IO} . 

	We now want to use a similar technique to obtain upper bounds of the number of algebra generators for bi-brackets.
	
	For the generating function of the bi-brackets we write
	\begin{align*}
	\mt{ X_1,\dots,X_l}{Y_1,\dots,Y_l} := 
	\sum_{\substack{s_1,\dots,s_l > 0 \\ r_1, \dots, r_l > 0}} \mb{s_1\,,\dots\,,s_l}{r_1-1\,,\dots\,,r_l-1} X_1^{s_1-1} \dots X_l^{s_l-1} \cdot Y_1^{r_1-1} \dots Y_l^{r_l-1}. 
	\end{align*}
	
	As shown in \cite{Ba2} this satisfies the partition relation
	\[ \mt{X_1,\dots,X_l}{Y_1,\dots,Y_l} 
	= \mt{X_1,\dots,X_l}{Y_1,\dots,Y_l} \Bigg |_P \,,
	\]
	with
	$\small{
	f( X_1,\dots,X_l, Y_1,\dots,Y_l) \big |_P = f( Y_1+\dots+Y_l, \dots, Y_1+Y_2 , Y_1, 
	X_l, X_l-X_{l-1},\dots, X_2-X_1)}.
	$
	Up to terms of depth less than $l$
	their product is given by 
	\[ 
	\mt{X_1,\dots,X_j}{Y_1,\dots,Y_j}   \cdot \mt{X_{j+1},\dots,X_{l}}{Y_{j+1},\dots,Y_{l}} 
	=  \mt{X_1,\dots,X_l}{Y_1,\dots,Y_l}  \Bigg |_{\operatorname{Sh}_{j,l}} + \dots \,,
	\]
	where, if  $ \Sigma_{j,l} \subset \Sigma_n$ denotes the shuffles of ordered sets 
	with $j$ and $l-j$ elements, we have  
	\[
	f( X_1,\dots,X_l, Y_1,\dots,Y_l )\big |_{\operatorname{Sh}_{j,l}} 
	=\sum_{\sigma \in \Sigma_{j,l}} f( X_{\sigma^{-1}(1)},  \dots,X_{\sigma^{-1}(l)},Y_{\sigma^{-1}(1)},  \dots,Y_{\sigma^{-1}(l)})
	\]
	Hence we get modulo products  and lower depth bi-brackets  
	\[ 
	\mt{X_1,\dots,X_l}{Y_1,\dots,Y_l}  \equiv \sum_{\alpha } 
	\alpha F_\alpha ( X_1,\dots,X_l, Y_1,\dots, Y_l)\,, 
	\]
	where $\alpha$ runs through a vector space basis  of the depth $l$ algebra generators of $\zq$  and 
	$F_\alpha$ is a polynomial in the partition shuffle space, which is defined as follows.     
	
	\begin{definition}
		Define for $l,k \geq 0$ the  partition shuffle space  by
		\[
		\mathbb{PS}(k-l,l) = \{ f \in \Q[x_1,..,x_l,y_1,..,y_l] | 
		\deg f = k-l, \,\, f \big|_P -f = f \big|_{\operatorname{Sh}_j} =0 \,\, \forall j \}\,.
		\]
	\end{definition}
	
	Using the same argument as in \cite{IKZ} the above discussion leads to the following upper bounds.
	
	\begin{corollary}\label{ps-bounds}
		The number $g_{k,l}$ of generators of weight $k$ and depth $l$ for the $\Q$-algebra $\zq$ is bounded by
		\[
		g_{k,l} \le \dim_\Q \mathbb{PS}(k-l,l).
		\]
	\end{corollary}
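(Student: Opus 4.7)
The plan is to mimic the argument of \cite{IKZ} for the classical double shuffle spaces, applied to the decomposition of $\mt{X,Y}$ modulo products and lower depth that was worked out in the paragraph preceding the corollary. First I would make $g_{k,l}$ precise by passing to the bi-graded \emph{indecomposables}
\[
Q_{k,l} \;:=\; \grwl_{k,l}(\zq)\Big/\bigl(\text{image of products of bi-brackets of positive weight}\bigr),
\]
so that $g_{k,l} = \dim_\Q Q_{k,l}$, and so that the weight-$k$, depth-$l$ bi-brackets span $Q_{k,l}$ by construction.

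The key step is to read the equivalence $\mt{X_1,\dots,X_l}{Y_1,\dots,Y_l} \equiv \sum_\alpha \alpha\, F_\alpha$ in bi-weight $(k,l)$ as saying that the weight-$k$ piece of $\mt{X,Y}$ gives a well-defined element
\[
\Phi_{k,l} \;\in\; Q_{k,l}\otimes_\Q \Q[X_1,\dots,X_l,Y_1,\dots,Y_l]_{k-l},
\]
the shift from $k$ to $k-l$ coming from the observation that a weight-$k$, depth-$l$ bi-bracket contributes a monomial of total degree $k-l$ to $\mt{X,Y}$. Next I would verify that $\Phi_{k,l}$ lies in $Q_{k,l}\otimes\mathbb{PS}(k-l,l)$. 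The partition identity $\mt{X,Y}|_P = \mt{X,Y}$ is an identity in $\zq$, hence holds in $Q_{k,l}$. The product formula above the corollary rewrites $\mt{X,Y}|_{\operatorname{Sh}_{j,l}}$ as a product of two generating series of depths $j$ and $l-j$ plus an error of depth $<l$; both summands vanish in $Q_{k,l}$, so $\Phi_{k,l}|_{\operatorname{Sh}_{j,l}}=0$ for every $j$. Since $|_P$ and $|_{\operatorname{Sh}_{j,l}}$ act only on the polynomial variables, these two facts force $\Phi_{k,l}$ into $Q_{k,l}\otimes\mathbb{PS}(k-l,l)$.

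Contracting $\Phi_{k,l}$ with functionals then yields a linear map $\mathbb{PS}(k-l,l)^\vee \to Q_{k,l}$, sending the dual basis vector of a monomial $X^{a}Y^{b}$ to the class in $Q_{k,l}$ of the coefficient of $X^{a}Y^{b}$ in $\mt{X,Y}$. The image is the span of all depth-$l$, weight-$k$ bi-brackets in $Q_{k,l}$, which is all of $Q_{k,l}$. Hence the map is surjective, and
\[
g_{k,l} \;=\; \dim_\Q Q_{k,l} \;\le\; \dim_\Q \mathbb{PS}(k-l,l)^\vee \;=\; \dim_\Q \mathbb{PS}(k-l,l).
\]

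The bulk of the work, and the main technical obstacle, lies in the bookkeeping of the first two paragraphs: one must verify that the partition and shuffle identities, which in $\zq$ hold only modulo products and lower-depth terms, do descend to a genuine identity in $Q_{k,l}\otimes\Q[X_1,\dots,Y_l]_{k-l}$, so that $\Phi_{k,l}$ is well defined and is forced into the partition shuffle space. Once this is set up, the bound is a formal consequence of the defining relations of $\mathbb{PS}(k-l,l)$ together with the surjectivity of the coefficient map.
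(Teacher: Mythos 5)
Your proposal is correct and takes essentially the same route as the paper: the paper's own ``proof'' is just the discussion preceding the corollary plus the remark that ``the same argument as in \cite{IKZ}'' gives the bound, and your passage to the bigraded indecomposables $Q_{k,l}$, the check that the partition and shuffle relations force the reduced generating series into $Q_{k,l}\otimes\mathbb{PS}(k-l,l)$, and the surjectivity of the coefficient map is exactly that argument written out in detail (including the correct degree shift from $k$ to $k-l$). No discrepancy to report.
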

	
	The bounds obtained via the partition shuffle spaces for the number of generators in depth $1$ and even weights are not optimal, as it is well-known that the ring of quasi-modular forms is generated in weight $2,4$ and $6$.  We view this as the analogue to the fact that Euler's relation for even zeta values is not seen by the depth $1$ double shuffle spaces as defined in \cite{IKZ}.
	
	{\small
		\begin{table}[H]  
			\begin{center}
				\begin{tabular}{c|c|c|c|c|c|c|c|c|c|c|c|c|c|c|c|c|c|} 
					$p_{l \backslash k}$&1&2&3&4&5&6&7&8&9&10&11&12&13&14&15&16&17\\ \hline
					% 1&1&1&2&1&3&1&4&0&5&0&6&0&7&0&8&0&9\\ \hline
					1&1&1&2&2&3&3&4&4&5&5&6&6&7&7&8&8&9\\ \hline
					2&-&0&0&1&0&2&0& 8&0&14&0&23&0&38&0&58&0\\ \hline
					3&-&-&0&0&1&0&3&0&9&0&27&0&62&0&125&0&238\\ \hline
					4&-&-&-&0&0&1&0&3&0&12&0&37&?&?&?&?&?\\ \hline
					5& -&-&-&-&0&0&1&0&4&0&15&?&?&?&?&?&?\\ \hline
					6& -&-&-&-&-&0&0&1&?&?&?&?&?&?&?&?&? \\ \hline
				\end{tabular}
				\caption{ $p_{k,l} = \dim_\Q \mathbb{PS}(k-l,l)$  \label{tab:gwl-bounds}}
			\end{center}
	\end{table}}
	
	{\bf Proof of Theorem \ref{thm:upperBK}}  Using the structure of the ring of quasi-modular forms and the data of Table \ref{tab:gwl-bounds} we get the 
	coefficient-wise upper bounds   
	{\small
		\begin{align*}
		\sum_{k \ge 0} &\dim_\Q  \filw_{k}(\zq) x^k  \le \frac{1}{1-x} \frac{1}{(1-x^2)(1-x^4)(1-x^6)} 
		\frac{x}{(1-x^2)^2} \cdot
		\prod_{k,l \ge 2} \frac{1}{(1-x^k)^{p_{k,l}}} \\
		&\le 1+2\,x+4\,{x}^{2}+8\,{x}^{3}+15\,{x}^{4}+28\,{x}^{5}+51\,{x}^{6}+92\,
		{x}^{7}+ 166\,{x}^{8}+  \dots
		\end{align*}}
	In addition, since $\mbox{"123-brackets"} \subseteq \zqc \subseteq \zq$, we get by the data of our tables  
	\begin{align*}
	&1+2\,x+4\,{x}^{2}+8\,{x}^{3}+15\,{x}^{4}+28\,{x}^{5}+51\,{x}^{6}+92\,
	{x}^{7}+ 165\,{x}^{8}+  \dots \\
	&\le  \sum_{k\ge 0} \dim_\Q  \filw_{k}(\mbox{"123-brackets"}) x^k  \le  \sum_{k\ge 0} \dim_\Q  \filw_{k}(\zq) x^k .  
	\end{align*}
	The claim of the theorem follows as the lower  and upper bounds coincide for $k \le 7$.
	\hfill $\Box$
	\medskip
	
	\begin{remark}
		In contrast to the multiple zeta values we expect that the upper bounds  for the number of generators obtained by the partition shuffle spaces  are not optimal for all $l \ge 2$, i.e. we don't expect equality in Corollary \ref{ps-bounds}. We think that this reflects the existence of cusp forms as distinguished elements in depth $2$, whereas even zeta values just live in depth $1$. 
		By work of Ecalle we know that there is a Lie algebra structure on the partition shuffle spaces, see e.g. \cite{E} or \cite{schneps:arigari}. In forthcoming work we will study a sub Lie algebra which conjecturally has the algebra $\mathcal{A}$ as its symmetric algebra, which might give another explanation of this effect. Another optimistic hope is that a coproduct structure on $\zq$, which allows to mimic Brown's proof in order to obtain conjecture (B3), exists.
	\end{remark}

{\small
Henrik Bachmann, Nagoya University. \\
\texttt{henrik.bachmann@math.nagoya-u.ac.jp }\\

Ulf K\"uhn, Universit\"at Hamburg.\\
 \texttt{kuehn@math.uni-hamburg.de}\\	
}
	
\end{document}